\theoremstyle{plain}
\newtheorem{theorem}{Theorem}[section]
\newtheorem{prop}[theorem]{Proposition}
\newtheorem{lemma}[theorem]{Lemma}
\theoremstyle{definition}
\newtheorem{definition}[theorem]{Definition}
\newtheorem{example}[theorem]{Example}
\newcommand{\Deltaop}{{\bf \Delta}^{op}}
\newcommand{\nerve}{\text{nerve}}
\newcommand{\we}{\text{we}}
\newcommand{\Hom}{\text{Hom}}
\newcommand{\Map}{\text{Map}}
\newcommand{\Aut}{\text{Aut}}
\newcommand{\Ho}{\text{Ho}}
\newcommand{\Ext}{\text{Ext}}
\newcommand{\SSets}{\mathcal{SS}ets}
\newcommand{\Sets}{\mathcal Sets}
\newcommand{\map}{\text{map}}
\newcommand{\ob}{\text{ob}}
\newcommand{\hoequiv}{\text{hoequiv}}
\newcommand{\css}{\mathcal{CSS}}
\newcommand{\dhw}{\mathcal{DH}(W)}
\newcommand{\id}{\text{id}}
\begin{document}

\title[Derived Hall algebras]{Derived Hall algebras for stable homotopy theories}

\author[J.E. Bergner]{Julia E. Bergner}

\address{Department of Mathematics, University of California, Riverside, CA 92521}

\email{bergnerj@member.ams.org}

\date{\today}

\subjclass[2000]{Primary: 55U35; Secondary: 55U40, 18G55, 18E30, 16S99}

\keywords{derived Hall algebras, homotopy theories, complete Segal spaces, $(\infty,1)$-categories}

\thanks{The author was partially supported by NSF grant DMS-0805951.  Support from the Fields Institute in Spring 2007 and the CRM Barcelona in Spring 2008, where some of this work was completed, is also gratefully acknowledged.}

\begin{abstract}
In this paper we extend To\"en's derived Hall algebra construction, in which he obtains unital associative algebras from certain stable model categories, to one in which such algebras are obtained from more general stable homotopy theories, in particular stable complete Segal spaces satisfying appropriate finiteness assumptions.
\end{abstract}

\maketitle

\section{Introduction}

Hall algebras associated to abelian categories play an important role in representation theory.  In particular, when the abelian category in question is the category of $\mathbb F_q$-representations of a quiver associated to a simply-laced Dynkin diagram, there is a close relationship between the Hall algebra and the quantum enveloping algebra of the Lie algebra associated to the same Dynkin diagram. Recent attempts to strengthen this relationship have led to the problem of associating some kind of Hall algebra to categories which are triangulated rather than abelian.  In particular, it is conjectured that one could recover the quantum enveloping algebra from an appropriate Hall-type algebra associated to Peng and Xiao's root category, which is, roughly speaking, the derived category of the category of this abelian category of representations, modulo a double shift relation \cite{px}.

In \cite{toendha}, To\"en constructs ``derived Hall algebras" associated to triangulated categories arising as homotopy categories of model categories whose objects are modules over a sufficiently finitary differential graded category over $\mathbb F_q$.  In doing so, he develops a formula for the multiplication in this algebra in such a way that it can be regarded as a generalization of the formula for the multiplication in an ordinary Hall algebra.  This formula was verified for more general triangulated categories, still satisfying certain finiteness conditions, by Xiao and Xu \cite{xx}.  However, none of these methods can yet be applied to the root category, as it does not satisfy these finiteness assumptions.

In this paper, we seek to generalize To\"en's development of derived Hall algebras.  Specifically, we modify his proof to establish derived Hall algebras corresponding to triangulated categories arising as homotopy categories for more general stable homotopy theories. Most triangulated categories can be realized as homotopy categories of such stable homotopy theories.  Although such triangulated categories are covered by Xiao and Xu's work, our objective is rather to broaden the context in which we can make use of homotopy-theoretic methods.  We expect that these ideas will shed light on the question of how to find a similar algebra arising from a triangulated category which is not finitary.  Also, it seems that this more flexible setting should be more amenable than the model category world for finding a coalgebra or even a Hopf algebra structure on derived Hall algebras, extending these structures which are significant in the study of ordinary Hall algebras.  This idea will be the subject of future work in collaboration with Robertson.

We expect that the methods of this paper will be applicable to other settings, enabling one to use more general stable homotopy theories in settings in which the additional structure of stable model categories is too restrictive.  For example, not all derived categories arise from actual model categories, but they do always come from a stable homotopy theory.  It is expected that the ability to work with such homotopy theories, which contain more information than their associated derived categories, will facilitate progress in the many areas in which derived categories appear.

In this paper, we use the complete Segal space model for homotopy theories.  If we regard a homotopy theory as a category with weak equivalences, then there are several equivalent models for homotopy theories as mathematical objects, in particular objects of model categories with appropriate weak equivalences.  Complete Segal spaces were developed by Rezk \cite{rezk}; they are simplicial spaces satisfying conditions enabling one to regard them as something like a simplicial category up to homotopy.  Their associated model category is in fact equivalent to the model structure on the category of simplicial categories \cite{thesis}, as well as to the model structures for Segal categories \cite{thesis} and quasi-categories \cite{jt}.  While any one of these models could be used, we prefer the complete Segal space model here because it is particularly well-suited for understanding fiber products of model categories \cite{fiberprod}, one of the key tools used by To\"en in his proof of the associativity of derived Hall algebras.  Specifically, we are able to use homotopy pullbacks of complete Segal spaces where he used the homotopy fiber product of model categories.

There is, in fact, another perspective on complete Segal spaces (and equivalent objects); they are also models for $(\infty,1)$-categories, or $\infty$-categories with $n$-morphisms invertible for $n>1$.  While the motivation for using complete Segal spaces in this paper arises from the viewpoint that they are generalizations of model categories, it is also useful, in particular when we need to define categorical notions such as colimits within them, to remember that they can be thought of as generalizations of ordinary categories in this way.

In Section 2, we give a review of stable model categories.  These ideas are generalized in Section 3, where we explain how Lurie's methods for stable quasi-categories can be translated to stable complete Segal spaces.  We review our main tool of interest, homotopy fiber products of model categories and homotopy pullbacks of complete Segal spaces, in Section 4, then introduce To\"en's derived Hall algebras in Section 5.  The main results of the paper can be found in Section 6, where we establish derived Hall algebras for stable complete Segal spaces.


\section{Stable model categories}

Recall that a \emph{model category} $\mathcal M$ is a category with three distinguished classes of morphisms: weak equivalences, fibrations, and cofibrations, satisfying five axioms \cite[3.3]{ds}.  Given a model category structure, one can pass to the \emph{homotopy category} $\Ho(\mathcal M)$, which is a localization of $\mathcal M$ with respect to the class of weak equivalences \cite[1.2.1]{hovey}.  In particular, the weak equivalences, as the morphisms that we wish to invert, make up the most important part of a model category.  An object $x$ in a model category $\mathcal M$ is \emph{fibrant} if the unique map $x \rightarrow \ast$ to the terminal object is a fibration.  Dually, an object $x$ in $\mathcal M$ is \emph{cofibrant} if the unique map $\phi \rightarrow x$ from the initial object is a cofibration.


The standard notion of equivalence of model categories is given by
the following definitions.  First, recall that an \emph{adjoint
pair} of functors $F \colon \mathcal C \leftrightarrows \mathcal D
\colon G$ satisfies the property that, for any objects $X$ of
$\mathcal C$ and $Y$ of $\mathcal D$, there is a natural
isomorphism
\[ \varphi: \Hom_\mathcal D(FX, Y) \rightarrow \Hom_\mathcal C(X,
GY). \]  The functor $F$ is called the \emph{left adjoint} and $G$
the \emph{right adjoint} \cite[IV.1]{macl}.

\begin{definition} \cite[1.3.1]{hovey}
An adjoint pair of functors $F \colon \mathcal M \leftrightarrows
\mathcal N \colon G$ between model categories is a \emph{Quillen
pair} if $F$ preserves cofibrations and $G$ preserves fibrations.  The left adjoint $F$ is called a \emph{left Quillen functor}, and the right adjoint $G$ is called the \emph{right Quillen functor}.
\end{definition}

\begin{definition} \cite[1.3.12]{hovey}
A Quillen pair of model categories is a \emph{Quillen equivalence}
if for all cofibrant $X$ in $\mathcal M$ and fibrant $Y$ in
$\mathcal N$, a map $f \colon FX \rightarrow Y$ is a weak
equivalence in $\mathcal D$ if and only if the map $\varphi f
\colon X \rightarrow GY$ is a weak equivalence in $\mathcal M$.
\end{definition}

We also consider model categories with the additional data that their homotopy categories are triangulated.  Recall that a \emph{triangulated category} $T$ is an additive category, together with an equivalence $\Sigma \colon T \rightarrow T$ called a \emph{shift functor}, and a collection of \emph{distinguished triangles}
\[ \xymatrix{x \ar[r]^\alpha & y \ar[r]^\beta & z \ar[r]^\gamma & \Sigma x} \] satisfying four axioms \cite[\S 2.1]{krause}.

For a model category to have a triangulated homotopy category, it must first be \emph{pointed}, in that its initial and terminal objects coincide.  Such an object is called a \emph{zero object}.

\begin{definition} \cite[7.1.1]{hovey}
A pointed model category $\mathcal M$ is \emph{stable} if its homotopy category $\Ho(\mathcal M)$ is triangulated.
\end{definition}

\begin{example}
Let $\mathcal R$ be a ring and $Ch(R)$ the category of chain complexes of $R$-modules.  Then the model category structure on $Ch(R)$ is triangulated.  In fact, its homotopy category is equivalent to the \emph{derived category} $\mathcal D(R)$, formed by taking $Ch(R)$ modulo the equivalence relation given by chain homotopies of maps, and formally inverting the quasi-isomorphisms \cite[\S 1.2]{krause}.
\end{example}

\section{Stable complete Segal spaces}

\subsection{Simplicial spaces and complete Segal spaces}

Recall that the simplicial indexing category $\Deltaop$ is defined to be the category with objects finite ordered sets $[n]=\{0 \rightarrow 1 \rightarrow \cdots \rightarrow n\}$ and morphisms the opposites of the order-preserving maps between them.  A \emph{simplicial set} is then a functor
\[ K \colon \Deltaop \rightarrow \Sets. \]
We denote by $\SSets$ the category of simplicial sets, and this category has a natural model category structure equivalent to the standard model structure on topological spaces \cite[I.10]{gj}.

One can consider more general simplicial objects; in this paper we work with \emph{simplicial spaces} (also called bisimplicial sets), or functors
\[ X \colon \Deltaop \rightarrow \SSets. \]
Given a simplicial set $K$, we also denote by $K$ the simplicial space which has the simplicial set $K$ at every level.  We denote by $K^t$, or ``$K$-transposed", the constant simplicial space in the other direction, where $(K^t)_n = K_n$, where on the right-hand side $K_n$ is regarded as a discrete simplicial set.  The category of simplicial spaces has a model category structure called the \emph{Reedy structure} in which weak equivalences are given levelwise and all objects are cofibrant \cite{reedy}.

Specifically, we consider simplicial spaces satisfying additional conditions, namely, those inducing a notion of composition up to homotopy.  These Segal spaces and complete Segal spaces were first introduced by Rezk \cite{rezk}, and the name is meant to be suggestive of similar ideas first presented by Segal \cite{segal}.

\begin{definition} \cite[4.1]{rezk}
A \emph{Segal space} is a Reedy fibrant simplicial space $W$ such that the Segal maps
\[ \varphi_n \colon W_n \rightarrow \underbrace{W_1 \times_{W_0} \cdots \times_{W_0} W_1}_n \] are weak equivalences of simplicial sets for all $n \geq 2$.
\end{definition}

Given a Segal space $W$, we can consider its \emph{objects} $\ob(W)= W_{0,0}$, and, between any two objects $x$ and $y$, the \emph{mapping space} $\map_W(x,y)$, given by the homotopy fiber of the map $W_1 \rightarrow W_0 \times W_0$ given by the two face maps $W_1 \rightarrow W_0$.  The Segal condition stated above guarantees that a Segal space has a notion of $n$-fold composition of mapping spaces, up to homotopy.

The \emph{homotopy category} of $W$, denoted $\Ho(W)$, has as objects the elements of the set $W_{0,0}$, and \[ \Hom_{\Ho(W)}(x,y) = \pi_0 \map_W(x,y). \]  A \emph{homotopy equivalence} in $W$ is a 0-simplex of $W_1$ whose image in $\Ho(W)$ is an isomorphism.  We consider the subspace of $W_1$ whose components contain homotopy equivalences, denoted $W_{\hoequiv}$.  Notice that the degeneracy map $s_0 \colon W_0 \rightarrow W_1$ factors through $W_{\hoequiv}$; hence we may make the following definition.

\begin{definition} \cite[\S 6]{rezk}
A \emph{complete Segal space} is a Segal space $W$ such that the map $W_0 \rightarrow W_{\hoequiv}$ is a weak equivalence of simplicial sets.
\end{definition}

Given this definition, we can describe the complete Segal space model structure on the category of simplicial spaces.

\begin{theorem} \cite[7.2]{rezk}
There is a model structure $\css$ on the category of simplicial spaces such that the fibrant and cofibrant objects are precisely the complete Segal spaces. Furthermore, $\css$ has the additional structure of a cartesian closed model category.
\end{theorem}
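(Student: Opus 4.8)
The plan is to obtain $\css$ as a left Bousfield localization of the Reedy model structure on simplicial spaces. The Reedy structure is cofibrantly generated, left proper, and cellular, with cofibrations the monomorphisms and every object cofibrant; hence Hirschhorn's localization machinery applies and, for any set $S$ of maps, produces a model structure $L_S$ with the same cofibrations as the Reedy structure and whose fibrant objects are exactly the Reedy fibrant $S$-local objects. The whole problem thus reduces to choosing $S$ so that its local objects are precisely the complete Segal spaces.

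First I would localize to force the Segal condition. For each $n$ let $G(n) \subseteq \Delta[n]^t$ be the \emph{spine}, the union of the edges joining consecutive vertices, and set $S_1 = \{G(n) \hookrightarrow \Delta[n]^t : n \geq 2\}$. A Reedy fibrant simplicial space $W$ is $S_1$-local exactly when each Segal map $\varphi_n$ is a weak equivalence, that is, when $W$ is a Segal space. Next I would force completeness. Let $E$ denote the nerve of the free-standing isomorphism, the groupoid on two objects with a single isomorphism between them, regarded as a discrete simplicial space, and let $S_2$ consist of the map $\Delta[0] \hookrightarrow E$ choosing one of its objects. For a Segal space $W$, the induced map $\map(E, W) \rightarrow \map(\Delta[0], W) = W_0$ models the source map $W_{\hoequiv} \rightarrow W_0$, so $W$ is $S_2$-local precisely when $W_0 \rightarrow W_{\hoequiv}$ is a weak equivalence, that is, when $W$ is complete. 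Localizing the Reedy structure at $S = S_1 \cup S_2$ therefore yields a model structure whose fibrant objects are exactly the complete Segal spaces; since the cofibrations are unchanged, every object remains cofibrant, so the complete Segal spaces are precisely the objects that are both fibrant and cofibrant.

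It remains to verify that $\css$ is cartesian closed. The Reedy structure is itself a cartesian closed model category, so the relevant pushout--product axiom holds before localizing; the content is that the localization preserves it. By the standard criterion for localizing a monoidal model category, since the unit $\Delta[0]$ is cofibrant and every object is cofibrant, it suffices to show that the class of $S$-local equivalences is closed under cartesian product with an arbitrary simplicial space, equivalently that for each $f \in S$ and each generating cofibration $g$ the pushout--product $f \mathbin{\square} g$ is again an $S$-local equivalence. I expect this to be the main obstacle: one must check by hand that the product of each spine inclusion, and of the completeness map, with the generating cofibrations $\partial\Delta[m]^t \hookrightarrow \Delta[m]^t$ and their internal analogues yields local equivalences, which requires understanding how the Segal and completeness localizations interact with products. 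Granting this, the localized structure satisfies the pushout--product axiom and the internal hom of the Reedy structure descends, exhibiting $\css$ as cartesian closed.
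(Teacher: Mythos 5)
The paper does not prove this statement at all; it is quoted directly from Rezk's paper (his Theorem 7.2), so the only sensible comparison is with Rezk's own proof --- and your sketch is essentially a reconstruction of it. Obtaining $\css$ as a left Bousfield localization of the Reedy structure, first at the spine inclusions to enforce the Segal condition and then at $\Delta[0] \rightarrow E$ (with $E$ the discrete nerve of the walking isomorphism) to enforce completeness, is exactly Rezk's construction, and your observation that the cofibrations are unchanged, so every object stays cofibrant and the fibrant-and-cofibrant objects are precisely the local objects, correctly settles the first sentence of the statement.

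The difficulty is that the two steps you pass over quickly are where all the work lies, and one of them you explicitly decline to do. First, the assertion that for a Segal space $W$ the map $\Map(E,W) \rightarrow W_0$ models $W_{\hoequiv} \rightarrow W_0$, so that locality with respect to $\Delta[0] \rightarrow E$ is equivalent to completeness, is a substantive theorem in Rezk's paper (his Theorem 6.2), whose proof occupies a separate section there; it should at least be flagged as a major imported input rather than stated as an evident fact. Second, the cartesian closedness is the entire content of the second sentence of the theorem, and your treatment of it consists of the correct reduction (since all objects are cofibrant, it suffices that $f \mathbin{\square} g$ be an $S$-local equivalence for each $f \in S$ and each generating cofibration $g$) followed by ``granting this.'' Verifying that compatibility is the bulk of Rezk's proof of the theorem: he shows that $W^X$ is again a Segal space (resp.\ complete Segal space) whenever $W$ is one and $X$ is an arbitrary simplicial space, with the Segal case resting on a combinatorial analysis of products of simplices and the complete case on the theory of homotopy equivalences underlying his Theorem 6.2. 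So your route is the right one --- it is Rezk's --- but as written the proposal establishes only the existence of the localized model structure and the identification of its fibrant-cofibrant objects, not the cartesian closure claimed in the statement.
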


The fact that $\css$ is cartesian closed allows us to consider, for any complete Segal space $W$ and simplicial space $X$, the complete Segal space $W^X$.  In particular, using the simplicial structure, the simplicial set at level $n$ is given by
\[ (W^X)_n = \Map(X \times \Delta[n]^t, W). \]

\subsection{Stable quasi-categories and stable complete Segal spaces}

As with model categories, we need to consider complete Segal spaces which are \emph{stable}, in the sense that their homotopy categories are triangulated.  It should be noted that, although we have given this simple definition of a stable complete Segal space, one could define it in a more technical way which permits a better understanding of the structure of a stable complete Segal space; Lurie has explained these ideas extensively for stable quasi-categories in \cite{luriestable}, and they can fairly easily be translated into the equivalent setting of complete Segal spaces.

Although we do not go into this level of detail on this point in this paper, there are other notions that have been developed for quasi-categories which are useful here for complete Segal spaces.  Thus, we give a very brief summary of quasi-categories and their relationship with complete Segal spaces.

Recall that a \emph{quasi-category} $X$ is a simplicial set satisfying the inner Kan condition, so that for any $n \geq 1$ and $0 <k<n$, a dotted arrow lift exists in any diagram of the form
\[ \xymatrix{V[n,k] \ar[r] \ar[d] & X \\
\Delta[n]. \ar@{-->}[ur] &  } \]  The notion of quasi-category goes back to Boardman and Vogt \cite{bv}, but is has received extensive attention more recently, especially by Joyal \cite{joyal} and Lurie \cite{lurie}.  In particular, Joyal proves that there is a model structure on the category of simplicial sets such that the fibrant and cofibrant objects are precisely the quasi-categories.  We denote this model category $\mathcal{QC}at$.  Furthermore, Joyal and Tierney have proved that the model category $\mathcal{QC}at$ is Quillen equivalent to Rezk's model category $\css$ \cite{jt}.  Remarkably, they prove that there are actually two different Quillen equivalences between these two model categories.  Here, we make use of the one that is particularly easy to describe, the right adjoint $\css \rightarrow \mathcal{QC}at$ given by $W \mapsto W_{\ast, 0}$.

Using this relationship we return to the matter of explaining some necessary structures on complete Segal spaces.  For a complete Segal space to be stable, we need it to be pointed, or to have a zero object, denoted 0.  As we have seen, in an ordinary category, a zero object is one which is both initial and terminal, so for any object $x$, there are unique morphisms $x \rightarrow 0$ and $0 \rightarrow x$.  As a complete Segal space is a homotopical generalization of a category, we require a homotopical notion of initial and terminal objects.  The following definitions, given by Joyal \cite{joyal}  and Lurie \cite[1.2.12.1, 1.2.12.6]{lurie} for quasi-categories, are easy to reformulate for complete Segal spaces.

\begin{definition}
An object $x \in W_{0,0}$ of a complete Segal space is \emph{initial} if it is initial as an object of $\Ho(W)$, i.e., if $\map_W(x,y)$ is weakly contractible for any $y \in W_{0,0}$.  Dually, $x$ is \emph{terminal} if it is terminal as an object of $\Ho(W)$, i.e., if $\map_W(y,x)$ is weakly contractible for any $y$.  An object is a \emph{zero object} of $W$ if it is both initial and terminal.
\end{definition}

In addition to having a zero object, we need to have a notion of ``pushout" within a complete Segal space, another analogue of a standard categorical idea within this generalized setting.  Fortunately, formal definitions of limits and colimits within quasi-categories have been established by Lurie \cite[1.2.13.4]{lurie}.  We give a brief exposition here, enough to translate his definition into the world of complete Segal spaces; see \cite[1.2.8, 1.2.13]{lurie} for a detailed treatment.

Let $X$ and $Y$ be simplicial sets.  We can define their \emph{join} $X \star Y$ by
\[ (X \star Y)_n = X_n \amalg Y_n \amalg \coprod_{i+j=n-1} X_i \times Y_j. \]  Note that the operation defines a monoidal product on $\SSets$ with unit the empty simplicial set $\phi$.  Then, for a fixed simplicial set $X$, we can define a functor
\[ X \star (-) \colon \SSets \rightarrow \SSets \] by
\[ Y \mapsto X \star Y \] and notice that the map $\phi \rightarrow Y$ is sent to the map $X \star \phi =X \rightarrow X \star Y$.  Thus, the simplicial set $X \star Y$ comes equipped with a canonical map $X \rightarrow X \star Y$, and so we can regard $X \star Y$ as an object of the \emph{undercategory} or \emph{category of simplicial sets under} $X$ \cite[II.6]{macl}, denoted $X \downarrow \SSets$.  In doing so, we can think of our functor as
\[ X \star (-) \colon \SSets \rightarrow X \downarrow \SSets. \]  This functor has a right adjoint given by
\[ (p \colon X \rightarrow Y) \mapsto Y. \]  To remember that $Y$ has come from some map $p \colon X \rightarrow Y$, Lurie denotes the image of this functor $Y_{p/}$.  We can think of $Y_{p/}$ as the simplicial set $Y$ with a specified $X$-shaped diagram inside it.

Such an object can be used to define colimits in a quasi-category.  If $Y$ is a quasi-category and $p \colon X \rightarrow Y$ is a map of simplicial sets, then a \emph{colimit} for $p$ is an initial object of $Y_{p/}$.  Dually, one could use the functor $(-) \star X$, its right adjoint, and the resulting definition of $Y_{/p}$ to define a limit in a quasi-category $Y$.

Now, we translate this definition into $\css$.

\begin{definition}
Let $W$ be a complete Segal space and $X$ a simplicial set, together with a map $p \colon X^t \rightarrow W$.  A \emph{colimit} for $p$ in $W$ is an initial object of $(W_{\ast,0})_{p/}$, regarded as an object of $W$.
\end{definition}

In this paper, we consider the case where the simplicial set $X$ is $\Delta[1] \amalg_{\Delta[0]} \Delta[1]$, forming the diagram $\cdotp \leftarrow \cdotp \rightarrow \cdotp$, so that the colimit is a ``pushout" in the complete Segal space $W$.  One can show that if $W$ is stable, the fact that $\Ho(W)$ is triangulated guarantees that colimits must always exist in $W$.  Again, we refer the reader to Lurie's manuscript on stable quasi-categories \cite{luriestable} for greater depth on this point.

\subsection{Model categories and complete Segal spaces}

We conclude this section with a brief exposition on the relationship between model categories and complete Segal spaces.  Since we are translating a construction on model categories to one on complete Segal spaces, we need to understand how to regard a model category as a specific kind of complete Segal space.

As described by Rezk \cite{rezk}, any category with weak equivalences gives rise to a complete Segal space via the functor we denote $L_C$; given such a category $\mathcal C$, $L_C \mathcal C$ is given by
\[ (L_C \mathcal C)_n = \nerve(\we (\mathcal C^{[n]})) \] where $\we(\mathcal C^{[n]})$ denotes the category of weak equivalences of chains of $n$ composable morphisms in $\mathcal C$.

If $\mathcal M$ is a model category, then we can apply this construction, but, as explained in \cite{fiberprod}, it is only a functor when the morphisms between model categories preserve weak equivalences.  Since we want a construction which is functorial on the category of model categories with left Quillen functors between them, we can modify the construction by restricting to the full subcategory of $\mathcal M$ whose objects are cofibrant.

The main result of \cite{css} is that this construction is well-behaved with respect to other natural ways of getting a complete Segal space from a model category; in particular, the resulting complete Segal space is weakly equivalent to the one obtained from taking the simplicial localization and then applying any one of several functors from simplicial categories to complete Segal spaces. There is an up-to-homotopy characterization of the resulting complete Segal space as well.  While we do not make use of this description explicitly in this paper, it is key to the proof of Theorem \ref{Fiber} below.

\section{Fiber products of model categories and homotopy pullbacks of complete Segal spaces}

A key tool in To\"en's proof that his derived Hall algebras are associative is the fiber product of model categories.  We begin with his definition as given in \cite{toendha}.
First, suppose that
\[ \xymatrix@1{\mathcal M_1 \ar[r]^{F_1} & \mathcal M_3 & \mathcal
M_2 \ar[l]_{F_2}} \] is a diagram of left Quillen functors of
model categories.  Define their \emph{fiber product} to be the
model category $\mathcal M = \mathcal M_1 \times^h_{\mathcal M_3}
\mathcal M_2$ whose objects are given by 5-tuples $(x_1, x_2, x_3;
u, v)$ such that each $x_i$ is an object of $\mathcal M_i$ fitting
into a diagram
\[ \xymatrix@1{F_1(x_1) \ar[r]^-u & x_3 & F_2(x_2) \ar[l]_v.} \]
A morphism of $\mathcal M$, say $f \colon (x_1, x_2, x_3; u, v)
\rightarrow (y_1, y_2, y_3; z, w)$, is given by maps $f_i: x_i
\rightarrow y_i$ such that the following diagram commutes:
\[ \xymatrix{F_1(x_1) \ar[r]^-u \ar[d]^{F_1(f_1)} & x_3 \ar[d]^{f_3} &
F_2(x_2) \ar[l]_-v \ar[d]^{F_2(f_2)} \\
F_1(y_1) \ar[r]^-z & y_3 & F_2(y_2) \ar[l]_-w.} \]

This category $\mathcal M$ can be given the structure of a model
category, where the weak equivalences and cofibrations are given
levelwise.  In other words, $f$ is a weak equivalence (or
cofibration) if each map $f_i$ is a weak equivalence (or
cofibration) in $\mathcal M_i$.

A more restricted definition of this construction requires that
the maps $u$ and $v$ be weak equivalences in $\mathcal M_3$.  Unfortunately, if we impose this additional condition, the resulting category cannot be given the structure of a model category because it does not have sufficient limits and colimits.  However, it is still a perfectly good category with weak equivalences, and in some cases we can localize $\mathcal M$ so that the fibrant-cofibrant objects of the localized model category have $u$ and $v$ weak equivalences \cite{fiberprod}.  Although To\"en uses the model structure given above, at the point where he really makes use of the fiber product he restricts to the case where the maps $u$ and $v$ are weak equivalences.  Thus, we assume here this extra structure.

Consider the functor $L_C$, described in the previous section, which takes a model category (or
category with weak equivalences) to a complete Segal space.  Given a
fiber square of model categories where we require the maps $u$ and $v$ to be weak equivalences, we can apply this functor to obtain a commutative square
\[ \xymatrix{L_C \mathcal M \ar[r] \ar[d] & L_C \mathcal M_2
\ar[d] \\
L_C \mathcal M_1 \ar[r] & L_C \mathcal M_3.} \]

Alternatively, we could apply the functor $L_C$ only to the
original diagram and take the homotopy pullback, which we denote
$P$, and obtain the following diagram:
\[ \xymatrix{P \ar[r] \ar[d] & L_C \mathcal M_2
\ar[d] \\
L_C \mathcal M_1 \ar[r] & L_C \mathcal M_3.} \]

\begin{theorem} \cite{fiberprod} \label{Fiber}
The complete Segal spaces $L_C \mathcal M$ and $P=L_C\mathcal M_1 \times^h_{L_C\mathcal M_3} \L_C \mathcal M_2$ are weakly equivalent.
\end{theorem}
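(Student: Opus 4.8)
The plan is to produce a canonical comparison map and show it is a weak equivalence. Applying $L_C$ to the fiber square furnishes the commutative square displayed above, and the universal property of the homotopy pullback supplies a natural map $\Phi \colon L_C \mathcal M \to P$. Both its source and target are complete Segal spaces: $L_C \mathcal M$ is one by construction, and $P$ is one because the fibrant objects of $\css$ are closed under homotopy pullback. Since a map between $\css$-fibrant objects is a weak equivalence precisely when it is a Reedy, that is levelwise, weak equivalence, and since the fibrations between complete Segal spaces agree with the Reedy fibrations between them, the homotopy pullback $P$ may be computed in the Reedy structure and is therefore formed levelwise. Thus it suffices to verify that for each $n \geq 0$ the map
\[ \Phi_n \colon \nerve(\we(\mathcal M^{[n]})) \longrightarrow \nerve(\we(\mathcal M_1^{[n]})) \times^h_{\nerve(\we(\mathcal M_3^{[n]}))} \nerve(\we(\mathcal M_2^{[n]})) \]
is a weak equivalence of simplicial sets.

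The next step is to identify the source of $\Phi_n$. Forming the category $\mathcal C^{[n]}$ of $n$-chains is a limit construction and so commutes with the fiber product, and since weak equivalences in $\mathcal M$ are defined levelwise, passing to the subcategory of weak equivalences commutes with it as well. Consequently $\we(\mathcal M^{[n]})$ is exactly the category whose objects consist of an $n$-chain of weak equivalences in each of $\mathcal M_1^{[n]}$, $\mathcal M_2^{[n]}$, $\mathcal M_3^{[n]}$, together with the connecting weak equivalences $u$ and $v$ of the defining $5$-tuples, and whose morphisms are the levelwise weak equivalences compatible with this data. In other words, it is the homotopy fiber product of categories with weak equivalences
\[ \we(\mathcal M_1^{[n]}) \times^h_{\we(\mathcal M_3^{[n]})} \we(\mathcal M_2^{[n]}), \]
taken in the sense that imposes that $u$ and $v$ be weak equivalences. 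The level-$n$ claim is therefore the assertion that the nerve of this fiber product of categories computes the homotopy pullback of the three nerves.

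To establish this categorical statement I would show that one of the projections from the fiber-product category, say onto the $\we(\mathcal M_2^{[n]})$-factor, induces after taking nerves a map whose strict fibers already model its homotopy fibers, so that the strict pullback computing $\nerve(\we(\mathcal M^{[n]}))$ also computes the homotopy pullback. Concretely, I would identify the homotopy fiber of $\Phi_n$ over a point and show it is contractible: this fiber is the classifying space of a category whose objects are the choices of $x_3$ together with the connecting weak equivalences $u$ and $v$ compatible with the fixed data, and a space of weak equivalences with its source pinned down is contractible. Organizing these contractible fibers, for instance through Quillen's Theorem B applied to the projection, would then yield that $\Phi_n$ is a weak equivalence.

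I expect the main obstacle to lie in this last step, namely in proving cleanly that imposing only the \emph{weak-equivalence} condition on $u$ and $v$ — rather than an isomorphism or an honest fibration condition — still produces a functor whose nerve computes the homotopy pullback; this is exactly where the homotopical content sits, and it is the substance of the main result of \cite{fiberprod}. By contrast, the two preliminary reductions, that both sides are complete Segal spaces and that weak equivalences and homotopy pullbacks of complete Segal spaces are detected levelwise, are comparatively formal, as is the identification of $\we(\mathcal M^{[n]})$ with the categorical fiber product. The genuine work is the comparison of these classification spaces in the presence of the weak-equivalence constraint.
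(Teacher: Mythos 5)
You should know at the outset that this paper contains no proof of this statement: it is imported verbatim from \cite{fiberprod}, so the only thing your proposal can be measured against is the proof in that reference, whose substance you explicitly defer to. Your two formal reductions are correct and are indeed the right first moves: since $\css$ is a left Bousfield localization of the Reedy structure, weak equivalences and fibrations between complete Segal spaces agree with the Reedy ones, so the homotopy pullback $P$ is computed levelwise; and since both To\"en's fiber product and $(-)^{[n]}$ are limit constructions, $\we(\mathcal M^{[n]})$ is the fiber product (with $u$, $v$ weak equivalences) of the categories $\we(\mathcal M_i^{[n]})$. The gap is in the step you yourself flag as the hard one, and the sketch you give for it would not survive being written out.

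Two concrete problems. First, the category of fillers --- choices of $x_3$ together with weak equivalences $u \colon F_1(x_1) \to x_3$ and $v \colon F_2(x_2) \to x_3$, with $x_1$, $x_2$ fixed --- is not ``a space of weak equivalences with its source pinned down.'' It has two sources pinned, hence no initial object, and it is not contractible in general. Modulo the Theorem B identification, it is the fiber of $\nerve(\we(\mathcal M^{[n]})) \to \nerve(\we(\mathcal M_1^{[n]})) \times \nerve(\we(\mathcal M_2^{[n]}))$, not the homotopy fiber of $\Phi_n$; what actually has to be proved is that its nerve is weakly equivalent to the space of paths in $\nerve(\we(\mathcal M_3^{[n]}))$ from $F_1(x_1)$ to $F_2(x_2)$. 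When $F_1(x_1) = F_2(x_2) = x$, that path space is the loop space $\Omega_x \nerve(\we(\mathcal M_3^{[n]}))$, which is contractible only when $x$ has no nontrivial self-homotopy-equivalences; so the contractibility claim is false, and proving the correct statement is exactly the theorem. Second, no purely categorical argument of the kind you outline can possibly close this gap, because the levelwise statement is false for general categories with weak equivalences: let $\mathcal A_1 = \mathcal A_2$ be the terminal category mapping to the two feet of the walking span $a \leftarrow c \rightarrow b$ (all maps declared weak equivalences). The filler category is empty, so the nerve of the categorical fiber product is empty, while the homotopy pullback of the nerves is contractible and in particular nonempty --- yet every ingredient you invoke (contractibility of under-categories, the Theorem B formalism) holds in this example. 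Hence the model structure on the $\mathcal M_i$ --- factorizations, cylinder objects, or the Dwyer--Kan/Rezk technology for classification spaces --- must enter precisely at this point, and your argument never uses it. What you have is a correct reduction of the theorem to its core, together with an acknowledgment that the core is the main result of \cite{fiberprod}; since that core is the statement being proved, the proposal as it stands is not a proof.
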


This theorem allows us to use the homotopy pullback of complete Segal spaces to generalize the situations in which To\"en uses the fiber product of model categories.  In particular, we generalize a scenario given by To\"en \cite[4.2]{toendha} as follows.

Let
\[ \xymatrix{W \ar[r]^{H_1} \ar[d]_{H_2} & X \ar[d]^{F_1} \\
Y \ar[r]^{F_2} & Z} \] be diagram of complete Segal spaces equipped with an isomorphism $\alpha \colon F_1 \circ H_1 \Rightarrow F_2 \circ H_2$, and define a map
\[ F \colon W \rightarrow V=X \times^h_{Z} Y \] by
\[ w \mapsto (H_1(w), H_2(w); \alpha_w). \]

\begin{lemma} \label{lemma}
If $\Ho(W) \rightarrow \Ho(V)$ is an equivalence of categories, then the diagram
\[ \xymatrix{\nerve(\Ho(wW)) \ar[r] \ar[d] & \nerve(\Ho(wX)) \ar[d] \\
\nerve(\Ho(wY)) \ar[r] & \nerve(\Ho(w Y))} \] is homotopy cartesian.
\end{lemma}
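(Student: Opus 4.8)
The plan is to reduce the claim to a statement about the \emph{moduli spaces of objects} of the $W_i$ and the way these behave under homotopy pullback. For a complete Segal space $W_i$, completeness identifies its simplicial set of $0$-simplices in the $\Deltaop$-direction with the classifying space $\mathscr M_i$ of the $\infty$-groupoid of objects and homotopy equivalences of $W_i$. The fundamental groupoid of $\mathscr M_i$ is exactly $\Ho(wW_i)$: its objects are the objects of $W_i$, and its morphisms are the homotopy classes of homotopy equivalences, so that $\nerve(\Ho(wW_i))$ is the $1$-type underlying $\mathscr M_i$. Hence the strategy is first to show that the $\mathscr M_i$ fit into a homotopy cartesian square, and then to descend this to the associated fundamental groupoids.

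For the first step I would use that homotopy pullbacks in $\css$ are computed levelwise. Since $\css$ is obtained by localizing the Reedy structure and each $W_i$ is fibrant (being a complete Segal space), the homotopy pullback $W = W_1 \times^h_{W_3} W_2$ agrees with the Reedy homotopy pullback, which is formed levelwise. Evaluating at level $0$ in the $\Deltaop$-direction then gives $\mathscr M(W) \simeq \mathscr M_1 \times^h_{\mathscr M_3} \mathscr M_2$; that is, the square of moduli spaces with $\mathscr M(W)$ in the initial corner is homotopy cartesian.

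The hypothesis enters in the second step, to replace $\mathscr M(W)$ by $\mathscr M_0$. The map $F \colon W_0 \rightarrow W$ induces $\mathscr M_0 \rightarrow \mathscr M(W)$, and I would argue that the assumption that $\Ho(W_0) \rightarrow \Ho(W)$ is an equivalence of categories forces this to be an equivalence on precisely the invariants seen by $\nerve(\Ho(w-))$: a bijection on isomorphism classes of objects (the components $\pi_0$, from essential surjectivity together with full faithfulness) and a bijection $\Aut_{\Ho(wW_0)}(x) \rightarrow \Aut_{\Ho(wW)}(Fx)$ on automorphism groups (the fundamental groups $\pi_1$, from full faithfulness restricted to the isomorphisms $x \rightarrow x$). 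Thus $\mathscr M_0 \rightarrow \mathscr M(W)$ induces an equivalence of fundamental groupoids, so $\nerve(\Ho(wW_0)) \simeq \nerve(\Ho(wW))$, and it remains to combine this with the homotopy cartesian square produced in the first step.

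The main obstacle is exactly the passage between the full moduli spaces and their fundamental groupoids in the presence of a homotopy pullback. Forming fundamental groupoids (equivalently, applying $1$-truncation) does not in general commute with homotopy pullback: the $\pi_1$ of a homotopy pullback receives a contribution from the $\pi_2$ of the base, here from the higher homotopy of the mapping spaces of $W_3$. So while the first two steps give $\nerve(\Ho(wW_0)) \simeq \nerve(\Ho(wW))$ and $\mathscr M(W) \simeq \mathscr M_1 \times^h_{\mathscr M_3} \mathscr M_2$, the remaining point is to see that $\nerve(\Ho(wW_1)) \times^h_{\nerve(\Ho(wW_3))} \nerve(\Ho(wW_2))$ agrees with the $1$-truncation of $\mathscr M_1 \times^h_{\mathscr M_3} \mathscr M_2$. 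This compatibility is where I expect the argument to require the most care, and it is the step at which the hypothesis on $\Ho(W_0) \rightarrow \Ho(W)$, and any truncation or finiteness properties of the $W_i$ available in the intended setting, must genuinely be brought to bear.
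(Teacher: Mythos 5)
Your proposal is, by your own admission, incomplete, and the missing step is not a technicality --- it is the entire content of the lemma. Your first two reductions are fine: $\nerve(\Ho(wW_i))$ is indeed the fundamental-groupoid truncation of the moduli space $\mathscr{M}_i = (W_i)_0$, homotopy pullbacks of complete Segal spaces may be computed levelwise in the Reedy structure (local objects are closed under homotopy pullback, so the localization does not interfere), and the hypothesis on $\Ho(W_0) \rightarrow \Ho(W)$ does give an equivalence $\nerve(\Ho(wW_0)) \simeq \nerve(\Ho(wW))$. But the lemma compares the homotopy pullback of the \emph{truncated} objects with the truncation of $W_0$, and the compatibility you defer --- that $\nerve(\Ho(wW_1)) \times^h_{\nerve(\Ho(wW_3))} \nerve(\Ho(wW_2))$ agrees with the $1$-truncation of $\mathscr{M}_1 \times^h_{\mathscr{M}_3} \mathscr{M}_2$ --- is precisely where the work lies. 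Note also that the lemma's hypothesis cannot rescue you here: it constrains only the comparison map out of $W_0$, whereas the truncation problem lives entirely over $W_1$, $W_2$, $W_3$, about which nothing is assumed; and the paper's standing finiteness assumptions make the offending groups ($\pi_1$ of spaces of self-equivalences in $W_3$, which in the stable setting are groups of the form $\Ext^{-1}(x,x)$) finite, not zero, so the $\pi_2$-contribution you point to does not vanish in the intended applications. Identifying the obstacle clearly, as you do, is not the same as overcoming it; as written, the proposal does not prove the lemma.

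It is worth comparing this with the paper's own proof, which runs at the level of categories rather than spaces: from the hypothesis it obtains an equivalence of maximal subgroupoids $\Ho(wW_0) \rightarrow \Ho(wW)$, and then concludes via the chain
\[ \Ho(w(W_1 \times^h_{W_3} W_2)) \simeq \Ho(wW_1 \times^h_{wW_3} wW_2) \simeq \Ho(wW_1) \times^h_{\Ho(wW_3)} \Ho(wW_2), \]
followed by taking nerves. The first link (the groupoid part commutes with homotopy pullback) is the categorical counterpart of your levelwise-pullback step and is unproblematic, since the maximal sub-$\infty$-groupoid is a homotopy-limit-preserving construction. The second link, however, is exactly the assertion that $\Ho$ --- that is, $1$-truncation --- commutes with the homotopy pullback in question, which is the very step you isolate as the difficulty; the paper records it as an equivalence without further argument. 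So your route and the paper's converge on the same key claim, and neither your outline nor a verbatim reading of the paper's chain of equivalences supplies the argument (or the strengthening of hypotheses, e.g.\ control of mapping spaces rather than just homotopy categories) that this step requires. To complete your proposal you would have to address that point head-on rather than defer it.
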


\begin{proof}
We want to show that the map
\[ \nerve(\Ho(wW)) \rightarrow \nerve(\Ho(wX)) \times^h_{\nerve(\Ho(wZ))} \nerve(\Ho(wY)) \] is a weak equivalence of simplicial sets.  By our assumption, we know that the map
\[ \Ho(W) \rightarrow \Ho(X \times^h_{Z} Y) \] is an equivalence of categories.  Notice that the homotopy category $\Ho(wW)$ is the maximal subgroupoid of $\Ho(W)$ and analogously for the other complete Segal spaces in the diagram.  Hence, we have an equivalence of categories
\begin{multline*}
\Ho(wW) \rightarrow \Ho(w(X \times^h_{Z} Y)) \simeq \Ho(wX \times^h_{wZ} wY) \\ \simeq \Ho(wX) \times^h_{\Ho(wZ)} \Ho(wY).
\end{multline*}
Since nerves of equivalent categories are weakly equivalent simplicial sets, the lemma follows.
\end{proof}

\section{Hall algebras and derived Hall algebras}

\subsection{Classical Hall algebras}

Let $\mathcal A$ be an abelian category.  Throughout this section, we assume that $\mathcal A$ is \emph{finitary}, in that, for any objects $x$ and $y$ of $\mathcal A$, the groups $\Hom(x,y)$ and $\Ext^1(x,y)$ are finite.

\begin{definition} \cite{schiff}
Given an abelian category $\mathcal A$, its \emph{Hall algebra} $\mathcal H(\mathcal A)$ is defined as
\begin{enumerate}
\item the vector space with basis isomorphism classes of objects in $\mathcal A$, with

\item multiplication given by
\[ [x] \cdotp [y]=\sum_{[z]} g^z_{x,y} [z] \] where the \emph{Hall numbers} $g^z_{x,y}$ are given by
\[ g^z_{x,y}= \frac{|\{0 \rightarrow x \rightarrow z \rightarrow y \rightarrow 0 \text{ exact} \} |}{|\Aut(x)| \cdotp |\Aut(y)|}. \]
\end{enumerate}
\end{definition}

Notice that our assumptions on $\mathcal A$ guarantee that each Hall number really is a finite number.  It can be shown that this definition gives $\mathcal H(\mathcal A)$ the structure of a unital associative algebra \cite{ringel}.

Although Hall algebras have been investigated for a number of purposes, recent interest in them has arisen from the close relationship between Hall algebras and quantum groups in the following situation.  Suppose that $\mathfrak g$ is a Lie algebra of type $A$, $D$, or $E$.  Then $\mathfrak g$ has an associated simply-laced Dynkin diagram, which is just an unoriented graph with no cycles.  Assigning an orientation to each of the edges in this graph gives a quiver, or oriented graph, which we denote $Q$.  Given a finite field $\mathbb F_q$, let $\mathcal A$ be the category of $\mathbb F_q$-representations of this quiver $Q$.  It can be shown that $\mathcal A$ is in fact an abelian category satisfying our finiteness assumptions, and hence we have an associated Hall algebra $\mathcal H(\mathcal A)$ \cite{ringel}.  The Hall algebra as we have defined it is not independent of the chosen orientation on the quiver, but a slight modification by Ringel makes it so; this algebra is often called the \emph{Ringel-Hall algebra} \cite{ringelrevis}.

However, another algebra can be obtained from $\mathfrak g$, namely the quantum enveloping algebra $U_q(\mathfrak g)$.  This algebra can be given its triangular decomposition
\[ U_q(\mathfrak g) = U_q(\mathfrak n^+) \otimes U_q(\mathfrak h) \otimes U_q(\mathfrak n^-). \]  Work of Ringel, further developed by Green, has shown that there is a close relationship between the Hall algebra $\mathcal H(\mathcal A)$ and the positive part of the quantum enveloping algebra,
\[ U_q(\mathfrak b^+) = U_q(\mathfrak n^+) \otimes U_q(\mathfrak h) \] \cite{green}, \cite{ringel}.

A natural question to ask is whether there is some kind of enlarged version of the Hall algebra from which one could recover not just $U_q(\mathfrak b^+)$, but all of $U_q(\mathfrak g)$.  Work of Peng and Xiao \cite{px} has led to the conjecture that such an algebra should be obtained from the following category.  Using the abelian category $\mathcal A$ of quiver representations as above, consider its bounded derived category $\mathcal D^\flat(\mathcal A)$, which is no longer abelian, but is instead a triangulated category.  As such, it has a shift functor $\Sigma \colon \mathcal D^\flat (\mathcal A) \rightarrow \mathcal D^\flat (\mathcal A)$.  We then define the \emph{root category} of $\mathcal A$ to be $\mathcal D^\flat (\mathcal A)/\Sigma^2$, the triangulated category obtained from $\mathcal D^\flat (\mathcal A)$ by identifying an object with its double shift.

It is still an open question how to find a ``Hall algebra" associated to this root category.  To begin with, the usual definition does not apply because the root category is not abelian.  It is, however, triangulated, and recent efforts in this area have focused on finding Hall algebras for triangulated categories.  In the rest of this section, we describe derived Hall algebras, defined by To\"en, which can be obtained from certain triangulated categories.  Thus far the necessary restrictions on these triangulated categories prohibit us from being able to define a derived Hall algebra for the root category.

\subsection{Derived Hall algebras}

Recall that a \emph{differential graded category} or \emph{dg category}, is a category enriched over $Ch(R)$, the category of cochain complexes of modules over a ring $R$.  Thus, given any objects $x$ and $y$ in a dg category $\mathcal T$, we have a cochain complex $\mathcal T(x,y)$.  Here, we assume that $R=\mathbb F_q$, the finite field with $q$ elements.  To\"en defines a dg category $\mathcal T$ to be \emph{locally finite} if for any objects $x$ and $y$ in $\mathcal T$, the cochain complex $\mathcal T(x,y)$ is cohomologically bounded and has all cohomology groups finite dimensional \cite[3.1]{toendha}.

Given a locally finite dg category $\mathcal T$, we consider $\mathcal M(\mathcal T)$, the category of dg $\mathcal T^{op}$-modules, or functors $\mathcal T \rightarrow Ch(\mathbb F_q)$.  This category has the structure of a stable model category, with levelwise weak equivalences and fibrations \cite[\S 3]{toendg}.  We have made finiteness assumptions about the dg category $\mathcal T$, but in taking the module category, we may have cochain complexes in the image which do not satisfy these kinds of conditions.  If we restrict to functors which are appropriately finitary, we no longer have a model structure, since this subcategory does not possess enough limits and colimits.  So, we work with the model category $\mathcal M(\mathcal T)$ of all modules but consider also the full subcategory $\mathcal P(\mathcal T)$ of perfect objects.  A module in $\mathcal M(\mathcal T)$ is \emph{perfect} if it belongs to the smallest subcategory of $\Ho(\mathcal M(\mathcal T))$ containing the quasi-representable modules (see \cite[3.6]{toendg} for a definition) and which is stable by retracts, homotopy pushouts, and homotopy pullbacks \cite{toendha}.  Perfect objects coincide with the compact objects in the triangulated category $\Ho(\mathcal M(\mathcal T))$.  (Recall that if $T$ is a triangulated category with arbitrary coproducts, then an object $x$ of $T$ is \emph{compact} if any map $x \rightarrow \amalg_i y_i$ factors through a finite coproduct \cite[6.5]{krause}.)

Since $\Ho \mathcal M(\mathcal T)$ is a triangulated category, it has a shift functor; we denote maps from $x$ to the $i$th shift of $y$ in this category by $[x, y[i]]$ or by $\Ext^i(x,y)$.  Notice that for perfect modules, these $\Ext$ groups are all finite.

\begin{theorem} \cite[1.1, 5.1]{toendha}
Let $\mathcal T$ be a locally finite dg category over a finite field $\mathbb F_q$.  Define $\mathcal{DH}(\mathcal T)$ to be the $\mathbb Q$-vector space with basis the characteristic functions $\chi_x$, where $x$ runs through the set of weak equivalence classes of perfect objects in $\mathcal M(\mathcal T)$.  Then there exists an associative and unital product
\[ \mu \colon \mathcal{DH}(\mathcal T) \otimes \mathcal{DH}(\mathcal T) \rightarrow \mathcal{DH}(\mathcal T) \] such that
\[ \mu(\chi_x, \chi_y)= \sum_z g^z_{x,y} \chi_z \] and these derived Hall numbers $g^z_{x,y}$ are given by the formula
\[ g^z_{x,y}= \frac{|[x,z]_y| \cdotp \prod_{i>0} |\Ext^{-i}(x,z)|^{(-1)^i}}{|\Aut(x)| \cdotp \prod_{i>0} |\Ext^{-i}(x,x)|^{(-1)^i}}, \] where $[x,z]_y$ denotes the subset of $[x,z]$ of morphisms $f \colon x \rightarrow z$ whose cone is isomorphic to $y$ in $\Ho(\mathcal M(\mathcal T))$.
\end{theorem}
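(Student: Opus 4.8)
The plan is to adapt To\"en's argument to the present framework, using homotopy pullbacks of complete Segal spaces in place of his homotopy fiber products of model categories and invoking Lemma~\ref{lemma} at the decisive step. First I would pass from the pair $(\mathcal M(\mathcal T), \mathcal P(\mathcal T))$ to the complete Segal space $W = L_C \mathcal P(\mathcal T)$ of perfect modules, so that the weak equivalence classes indexing the basis $\{\chi_x\}$ become the path components of the moduli space $wW$ of objects. The product is then built from a span of complete Segal spaces
\[ wW \times wW \xleftarrow{\;s\;} \mathcal E \xrightarrow{\;t\;} wW, \]
where $\mathcal E$ is the space of cofiber sequences $x \to z \to y$ formed using the homotopy-pushout structure available in $W$, the map $s$ records the pair (sub-object, quotient) $(x,y)$, and $t$ records the total object $z$. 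I would define $\mu = t_! \circ s^*$, where $s^*$ pulls characteristic functions back along $s$ and $t_!$ integrates them forward along $t$ against homotopy cardinality.

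The next step is to verify finiteness and to recover the stated formula. Local finiteness of $\mathcal T$ together with perfectness forces every $\Ext^i(x,y) = [x, y[i]]$ to be finite and to vanish for $|i| \gg 0$, so each moduli space in sight has finitely many components, each with finite homotopy groups vanishing in high degrees, and its homotopy cardinality $\sum_{[c]} \prod_{i \geq 1} |\pi_i|^{(-1)^i}$ is a well-defined rational number. A direct computation identifies the homotopy cardinality of the space of self-equivalences of $x$ with the claimed denominator $|\Aut(x)| \prod_{i>0}|\Ext^{-i}(x,x)|^{(-1)^i}$, since its components form $\Aut(x)$ and its higher homotopy groups are $\pi_j \map_W(x,x) = \Ext^{-j}(x,x)$; equivalently, the component of $x$ in $wW$ has homotopy cardinality equal to the reciprocal of this quantity. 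Likewise the subspace of $\map_W(x,z)$ on the components whose cone is $y$ has homotopy cardinality $|[x,z]_y| \prod_{i>0}|\Ext^{-i}(x,z)|^{(-1)^i}$, the numerator. Evaluating $t_! s^*(\chi_x \otimes \chi_y)$ at $z$ by integrating over the homotopy fiber of $t$ — which is the space of maps $x \to z$ with cone $y$, taken modulo the self-equivalences of $x$ — then reproduces $g^z_{x,y}$ exactly, and finiteness of the sum over $z$ follows since such $z$ are classified up to equivalence by the finite set $\Ext^1(y,x)$ modulo automorphisms.

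The heart of the matter, and the step I expect to be hardest, is associativity. I would express both $\mu(\mu(\chi_x,\chi_y),\chi_w)$ and $\mu(\chi_x,\mu(\chi_y,\chi_w))$ as a single pull-push along the moduli space $\mathcal F$ of two-step filtrations with successive subquotients $x,y,w$; realizing $\mathcal F$ by first forming the lower extension and then the upper one, or in the opposite order, presents it in two ways as an iterated homotopy pullback. The two computations will agree provided the governing square of moduli spaces of sequences is homotopy cartesian, and it is here that Lemma~\ref{lemma} does the work, reducing the homotopical statement to the assertion that a comparison functor $\Ho(W_0) \to \Ho(W_1 \times^h_{W_3} W_2)$ is an equivalence of categories, which I would verify on the triangulated homotopy categories using the octahedral axiom. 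The main obstacles are thus twofold: checking this equivalence-of-categories hypothesis honestly, including the passage from the homotopy categories back to the full moduli spaces on which the homotopy cardinalities depend, and establishing the homotopy-cardinality analogue of Fubini's theorem that turns a homotopy cartesian square of finite moduli spaces into the corresponding multiplicative identity among pull-push coefficients. Unitality is then comparatively routine: taking $\chi_0$ for the zero object of $W$, the space of cofiber sequences with sub-object or quotient equal to $0$ deformation retracts onto $wW$, giving $\mu(\chi_0,\chi_x) = \mu(\chi_x,\chi_0) = \chi_x$.
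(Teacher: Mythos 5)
Your proposal is correct and follows essentially the same route as the paper: although the paper states this particular theorem as a citation to To\"en, its own proof of the generalization in Section 6 uses exactly your span $X^{(0)} \times X^{(0)} \xleftarrow{s \times c} X^{(1)} \xrightarrow{t} X^{(0)}$ of moduli spaces of cofiber sequences, the same pull-push product $\mu = t_! \circ (s\times c)^*$ weighted by homotopy cardinality, the same fibration/long-exact-sequence computation identifying the Hall numbers, and the same reduction of associativity via the base-change Lemma~\ref{cartesian} and Lemma~\ref{lemma} to an equivalence of homotopy categories verified with the triangulated-category axioms. The only difference is cosmetic: you specialize the general construction to $W = L_C\mathcal P(\mathcal T)$ to recover To\"en's dg-category statement, whereas the paper simply cites To\"en for that case.
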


\section{More general derived Hall algebras}

In this section, we establish the existence of derived Hall algebras for sufficiently finitary stable complete Segal spaces.  Our strategy follows that of To\"en, and some proofs of his continue to hold without change.  However, without the restrictions of a model structure, some of the proofs are greatly simplified.

Throughout this section, suppose that $W$ is a pointed stable complete Segal space, so that $\Ho(W)$ is a triangulated category with a zero object.  As in the previous section, we define for any objects $x, y$ in $W$
\[ \Ext^i(x,y)= [x,y[i]] \] where the outside brackets denote maps in $\Ho(W)$ and the inside brackets denote the shift functor giving the triangulated structure of $\Ho(W)$.

\begin{definition}
A stable complete Segal space $W$ is \emph{finitary} if in $\Ho(W)$ we have that $\Ext^i(x,y)$ is finite for all pairs of objects $(x,y)$ and all values of $i$, and zero for sufficiently large values of $i$.
\end{definition}

We assume for the rest of the paper that all our stable complete Segal spaces are finitary.

Since the model category $\css$ is cartesian closed, the simplicial space $W^{\Delta[1]}$ is also a complete Segal space.  Notice that $W$ itself is isomorphic to the mapping object $W^{\Delta[0]}$, and so we can use the two maps $\Delta[0] \rightarrow \Delta[1]$ to define ``source" and ``target"  maps $s,t \colon W^{\Delta[1]} \rightarrow W$.  Since an object of $W^{\Delta[1]}$ is a 0-simplex $u \in \map_W(x,y)$ for some $x$ and $y$ objects of $W$, these two maps can be defined by $s(u)=x$ and $t(u)=y$.   We also have a ``cone" map $c \colon W^{\Delta[1]} \rightarrow W$ given by $c(u)= y \amalg_x 0$, where such a cone object exists because we have required that $W$ be stable; in the homotopy category, it is just the completion of $u \colon x \rightarrow y$ to a distingushed triangle.

Using these maps, we can put together the diagram
\[ \xymatrix{W^{\Delta[1]} \ar[r]^t \ar[d]_{s \times c} & W \\
W \times W & } \] analogous to To\"en's diagram of model categories \cite[\S 4]{toendha}.

Because we are no longer working with model categories, a number of aspects of this diagram have been simplified, compared to the analogous one in To\"en's paper.  Because the objects are complete Segal spaces, rather than model categories, we no longer have to be concerned with whether these maps are left Quillen functors.  Furthermore, we are able to impose conditions on $W$ from the beginning so that its objects are already ``perfect" in that all the necessary finiteness conditions are already satisfied.

A word on this point would perhaps be helpful here.  It is likely that a stable complete Segal space that would arise in nature would not have all pairs of objects $x$ and $y$ satisfying the necessary finiteness conditions on $\Ext^i(x,y)$.  However, we can show that restricting to the sub-complete Segal space with objects satisfying such conditions is still a complete Segal space.  Explicitly, given a complete Segal space $W$, consider the doubly constant simplicial space $W_{0,0}$, and the sub-simplicial space $Z_{0,0}$ given by the perfect objects of $W$.  Then define $Z$ to be the simplicial spaces given by the pullback
\[ \xymatrix{Z \ar[r] \ar[d] & W \ar[d] \\
Z_{0,0} \ar[r] & W_{0,0}.} \]
Since $W_{0,0}$ is discrete, the map $W \rightarrow W_{0,0}$ is a fibration in $\css$, from which it follows that the map $Z \rightarrow Z_{0,0}$ is a fibration also.  Thus, $Z$ is a fibrant simplicial space in $\css$, or a complete Segal space.  Furthermore, since compact objects of a triangulated category form a triangulated subcategory \cite[6.5]{krause}, $\Ho(Z)$ is triangulated and $Z$ is stable.  Thus, we can restrict to the appropriate setting without losing the structure that we need, and so we always assume that, given an arbitrary stable complete Segal space $W$, we have implicitly restricted to $Z$.

Now, as To\"en does, we restrict to the sub-complete Segal spaces of $W$ and $W^{\Delta[1]}$, whose mapping spaces are sent to isomorphisms in the homotopy category; we call these spaces $wW$ and $wW^{\Delta[1]}$, respectively.  Taking the nerve of the homotopy categories, we obtain a diagram
\[ \xymatrix{\nerve(\Ho(wW^{\Delta[1]})) \ar[r]^t \ar[d]_{s \times c} & \nerve(\Ho(wW)) \\
\nerve(\Ho(wW)) \times \nerve(\Ho(wW)). } \]
For simplicity of notation, we write this diagram
\[ \xymatrix{X^{(1)} \ar[r]^t \ar[d]_{s \times c} & X^{(0)} \\
X^{(0)} \times X^{(0)}. } \]

To get an algebra with a well-defined multiplication, we need to show that this diagram of spaces satisfies some properties.

\begin{definition} \cite[2.1]{toendha}
An object $X$ in the homotopy category of spaces is \emph{locally finite} if it satisfies the conditions
\begin{enumerate}
\item for any base point $x \in X$ and $i >0$, the group $\pi_i(X,x)$ is finite, and

\item for any base point $x \in X$, there is some $n$, depending on $x$, such that $\pi_i(X,x)=0$ for all $i>n$.
\end{enumerate}
\end{definition}

\begin{lemma}
The spaces $X^{(0)}$ and $X^{(1)}$ are locally finite.
\end{lemma}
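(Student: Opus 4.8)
The plan is to use the fact that in each case we are taking the nerve of a \emph{groupoid}. Writing $V$ for either $W$ (giving $X^{(0)}$) or $W^{\Delta[1]}$ (giving $X^{(1)}$), the decoration $w$ restricts to the homotopy equivalences, so every morphism of $\Ho(wV)$ is invertible. Choosing one representative in each isomorphism class exhibits $\Ho(wV)$ as equivalent to the disjoint union of the one-object groupoids $\Aut(x)$, and since nerves of equivalent categories are weakly equivalent simplicial sets (as already used in the proof of Lemma \ref{lemma}), I obtain a weak equivalence
\[ \nerve(\Ho(wV)) \simeq \coprod_{[x]} B\Aut(x), \]
where $[x]$ ranges over isomorphism classes and each $B\Aut(x)=\nerve(\Aut(x))$ is a $K(\Aut(x),1)$.

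From this description condition (2) of local finiteness is immediate: each component $B\Aut(x)$ has $\pi_i=0$ for all $i\geq 2$, so we may take $n=1$ at every basepoint. Note that there may be infinitely many isomorphism classes, but this only affects $\pi_0$, which local finiteness does not constrain.

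It then remains to verify condition (1). Since $\pi_1(B\Aut(x))=\Aut(x)$, it suffices to show that each of these automorphism groups is finite. For $X^{(0)}$ this is direct: an automorphism of $x$ in $\Ho(W)$ is in particular an element of the set $\Hom_{\Ho(W)}(x,x)=\Ext^0(x,x)$, which is finite because $W$ is finitary, so $\Aut(x)$ is finite. For $X^{(1)}$ I would first identify $\Ho(wW^{\Delta[1]})$ with the maximal subgroupoid of the arrow category of $\Ho(W)$: an object is a morphism $u\colon x\to y$, and an automorphism of $u$ is a pair $(\phi,\psi)$ of automorphisms of $x$ and $y$ with $\psi\circ u=u\circ\phi$. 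Such pairs form a subgroup of the finite group $\Aut(x)\times\Aut(y)$, hence $\Aut(u)$ is finite.

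I expect the main obstacle to be the identification of $\Ho(wW^{\Delta[1]})$ with the subgroupoid of arrows of $\Ho(W)$, since passing to the homotopy category need not commute with forming the internal mapping object $W^{\Delta[1]}$. To justify it I would unwind $(W^{\Delta[1]})_n=\Map(\Delta[1]\times\Delta[n]^t,W)$ together with the Segal condition, so that a $0$-simplex of $W^{\Delta[1]}$ is exactly a morphism of $W$ while $\pi_0$ of its mapping spaces records homotopy classes of commuting squares. In fact, for the purpose at hand one need not establish the full identification: it is enough to bound $\Aut(u)$ by the finite set $\Ext^0(x,x)\times\Ext^0(y,y)$, which already forces condition (1).
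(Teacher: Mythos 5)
Your proposal diverges from the paper at a fundamental point, and the divergence is a genuine gap rather than an alternative route. You read $X^{(0)}=\nerve(\Ho(wW))$ literally as the nerve of an ordinary groupoid, so that each component is a $K(\Aut(x),1)$ and condition (2) of local finiteness holds vacuously with $n=1$. But the space the paper actually works with (following To\"en) is the classification or moduli space of objects, whose component at $x$ is the classifying space of the full self-equivalence \emph{space}, not just of its $\pi_0$: the paper's own proof asserts $\pi_1(X^{(0)},x)\subseteq \Ext^0(x,x)$ \emph{and} $\pi_i(X^{(0)},x)\cong \Ext^{1-i}(x,x)$ for $i>1$, groups which are generally nonzero. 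This higher homotopy is not optional notationally sloppy baggage; it is the whole point. Condition (2) is exactly where the finitary hypothesis (vanishing of $\Ext^{-i}(x,x)$ for $i\gg 0$) must enter, and later in the paper the derived Hall numbers are extracted from the higher homotopy groups of the homotopy fibers of $t\colon X^{(1)}\rightarrow X^{(0)}$, producing the factors $\prod_{i>0}|\Ext^{-i}(x,z)|^{(-1)^i}$. If $X^{(0)}$ and $X^{(1)}$ were $1$-types, those factors would all be trivial and the construction would collapse. So your argument for $\pi_1$ of $X^{(0)}$ agrees with the paper's, but the groups $\pi_i$ for $i\geq 2$ --- the ones carrying the real content of the lemma --- are simply absent from your proof; the fact that condition (2) came for free should be read as a warning sign, not a simplification.

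There is a second, independent gap, sitting exactly where you flagged your own worry. Even granting the groupoid reading, your fallback bound --- that $\Aut(u)$ injects into $\Ext^0(x,x)\times\Ext^0(y,y)$ --- is false in general, for precisely the reason you identified: $\Ho(W^{\Delta[1]})$ is not the arrow category of $\Ho(W)$. A morphism $u\rightarrow u$ in $\Ho(W^{\Delta[1]})$ is a homotopy class of homotopy-coherent squares, i.e.\ a pair $(\phi,\psi)$ \emph{together with} a homotopy $\psi u\simeq u\phi$; the mapping space $\map_{W^{\Delta[1]}}(u,u)$ is the homotopy pullback of $\map_W(x,x)\rightarrow\map_W(x,y)\leftarrow\map_W(y,y)$, so the comparison map $\Aut(u)\rightarrow\Aut(x)\times\Aut(y)$ has fibers which are quotients of $\Ext^{-1}(x,y)=\pi_1(\map_W(x,y))$ and need not be injective. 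Finiteness of $\Aut(u)$ does still hold, but only via the long exact sequence of this fibration together with $|\Ext^{-1}(x,y)|<\infty$ --- and once you run that argument you are in effect reproducing To\"en's proof (cited by the paper), which compares all the homotopy groups of $X^{(1)}$ with $\Ext$ groups of $W$ through exactly such homotopy pullbacks of mapping spaces. That fiber-sequence argument, not an embedding of automorphism groups, is the step that actually closes both halves of the lemma; it is also more robust than the paper's own very terse reduction of $X^{(1)}$ to $X^{(0)}\times\Delta[1]$.
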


\begin{proof}
For any $x \in \pi_0(X^{(0)})$, we use the facts that
\[ \pi_1(X^{(0)}) \subseteq \Ext^0(x,x)=[x,x] \] and
\[ \pi_i(X^{(0)}) = \Ext^{1-i}(x,x) \] for $i>1$ \cite[3.2]{toendha}.  Our assumption on $W$ guarantees that these groups are all finite, and that they are zero for sufficiently large $i$.  Thus, $X^{(0)}$ is locally finite.

To show that $X^{(1)}=\nerve(\Ho(wW^{\Delta[1]}))$ is locally finite, notice that this space is weakly equivalent to
\[ \nerve(\Ho(wW)) \times \Delta[1] = X^{(0)} \times \Delta [1] \] which is also locally finite.
\end{proof}

\begin{definition} \cite[2.5]{toendha}
A morphism $f \colon X \rightarrow Y$ of locally finite homotopy types is \emph{proper} if, for any $y \in \pi_0(Y)$, there are only finitely many $x \in \pi_0(X)$ with $f(x)=y$.
\end{definition}

Notice that $f$ is proper if and only if, for any $y \in \pi_0(Y)$, the set $\pi_0(F_y)$ is finite.  The proof of the following lemma follows just as it does in To\"en's paper \cite[3.2]{toendha}.

\begin{lemma}
The map $s \times c$ is proper.
\end{lemma}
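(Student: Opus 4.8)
The plan is to argue directly from the reformulation recorded just above the statement: to show $s \times c$ is proper it suffices to show that for every point $(x,y) \in \pi_0(X^{(0)} \times X^{(0)}) = \pi_0(X^{(0)}) \times \pi_0(X^{(0)})$, the homotopy fiber $F_{(x,y)}$ of $s \times c$ has finite $\pi_0$. A point of $\pi_0(X^{(0)})$ is an isomorphism class of objects of $\Ho(W)$, and a point of $\pi_0(X^{(1)})$ is an isomorphism class of objects of $\Ho(wW^{\Delta[1]})$, that is, of morphisms $u \colon x' \to z$ in $\Ho(W)$ taken up to isomorphism of arrows. Since $s \times c$ sends such a $u$ to the pair $(s(u), c(u)) = (x', \text{cone}(u))$, unwinding the definition of the homotopy fiber identifies $\pi_0(F_{(x,y)})$ with the set of isomorphism classes of morphisms $u \colon x \to z$ in $\Ho(W)$, with $z$ allowed to vary, whose cone is isomorphic to $y$.

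First I would translate this $\pi_0$ computation into the language of distinguished triangles. Because $W$ is stable, each such $u$ completes to a distinguished triangle $x \xrightarrow{u} z \to y \xrightarrow{w} \Sigma x$ in the triangulated category $\Ho(W)$; conversely, the rotation and filling axioms show that a connecting morphism $w \in [y, \Sigma x]$ determines the object $z$ and the morphism $u$ up to (non-canonical) isomorphism. This produces a well-defined map from $\pi_0(F_{(x,y)})$ to the orbit set of $[y, \Sigma x]$ under the induced action of $\Aut(x) \times \Aut(y)$, and one checks that this map has finite fibers, the remaining ambiguity being exactly the choice of filling in TR3.

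The conclusion then follows from the finitary hypothesis. By definition $[y, \Sigma x] = [y, x[1]] = \Ext^1(y, x)$, which is finite, and the automorphism groups $\Aut(x), \Aut(y)$ sit inside $[x,x] = \Ext^0(x,x)$ and $[y,y] = \Ext^0(y,y)$, which are finite as well; hence the orbit set is finite, and therefore $\pi_0(F_{(x,y)})$ is finite. This is exactly the required properness, and the argument runs in parallel with To\"en's \cite[3.2]{toendha}, with the relevant $\Ext$-finiteness now supplied by our standing assumption that $W$ is finitary rather than by the local finiteness of a dg category.

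I expect the main obstacle to be the first step, namely making precise the identification of $\pi_0$ of the homotopy fiber with isomorphism classes of morphisms having prescribed source and cone. The delicacy is twofold: the homotopy fiber over $(x,y)$ remembers chosen paths in the base, i.e. isomorphisms onto $x$ and onto $y$, so one must verify that these extra data neither create nor collapse components beyond what the triangle classification predicts; and the completion of a connecting map $w$ to a triangle is unique only up to non-canonical isomorphism, so the passage between arrows and elements of $\Ext^1(y,x)$ must be bookkept modulo the automorphism actions. Once this correspondence up to finite ambiguity is in place, the finiteness of $\pi_0(F_{(x,y)})$ is immediate, since the only infinite-looking feature, the varying target $z$, is entirely controlled by the finite group $\Ext^1(y,x)$.
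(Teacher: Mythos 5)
Your proposal is correct and is essentially the same argument the paper relies on: the paper gives no independent proof, deferring to To\"en's Lemma 3.2, which is precisely the triangle-classification argument you spell out (the fiber over $(x,y)$ is controlled, up to the finite ambiguity coming from $\Aut(x) \times \Aut(y)$ and non-unique TR3 fillings, by the set $[y, \Sigma x] = \Ext^1(y,x)$, finite by the finitary hypothesis). The only cosmetic difference is that you phrase properness via $\pi_0$ of homotopy fibers, using the paper's preceding remark, rather than via fibers of $\pi_0$ as in the definition itself.
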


With these properties established for our diagram, we can use it to define an algebra analogous to that of To\"en \cite[\S 4]{toendha}.

\begin{definition} \cite[2.2]{toendha}
Let $X$ be a space.  The $\mathbb Q$-vector space of \emph{rational functions with finite support} on $X$ is the $\mathbb Q$-vector space of functions on the set $\pi_0(X)$ with values in $\mathbb Q$ and finite support, and is denoted by $\mathbb Q_c(X)$.
\end{definition}

\begin{definition}
As a vector space, the \emph{derived Hall algebra} $\dhw$ of $W$ is given by $\mathbb Q_c(X^{(0)})$.
\end{definition}

Given a morphism $f \colon X \rightarrow Y$ of locally finite spaces, we define a push-forward morphism $f_! \colon \mathbb Q_c(X) \rightarrow \mathbb Q_c(Y)$ as follows.  Given $y \in \pi_0(Y)$, let $F_y$ denote the homotopy fiber of $f$ over $y$, and let $i \colon F_y \rightarrow X$ be the natural map.  Using the long exact sequences of homotopy groups, one can see that for any $z \in \pi_0(F_y)$, the group $\pi_i(F_y,z)$ is finite for all $i>0$ and zero for sufficiently large $i$.  Furthermore, the fibers of the map $\pi_0(F_y) \rightarrow \pi_0(X)$ are all finite.  Then, for any $\alpha \in \mathbb Q_c(X)$ and $y \in \pi_0(Y)$, define the function $f_!$ by
\[ f_!(\alpha)(y)= \sum_{z \in \pi_0(F_y)} \alpha(i(z)) \cdotp \prod_{i>0}|\pi_i(F_y, z)|^{(-1)^i}. \] The assumption that $\alpha$ have finite support guarantees that $f_!$ is well-defined.

%

If $f \colon X \rightarrow Y$ is a proper map of locally finite spaces, then we have a well-defined pullback $f^* \colon \mathbb Q_c(Y) \rightarrow \mathbb Q_c(X)$ defined in the usual way as $f^*(\alpha)(x)= \alpha(f(x))$ for any $\alpha \in \mathbb Q_c(Y)$ and $x \in \pi_0(X)$.  The requirement that $f$ be proper guarantees that $f^*(\alpha)$ has finite support, so that $f^*$ is in fact well-defined.

The following lemma is key for establishing associativity.

\begin{lemma} \cite[2.6]{toendha} \label{cartesian}
Consider a homotopy pullback diagram of locally finite spaces
\[ \xymatrix{X' \ar[r]^{v} \ar[d]_g & X \ar[d]^f \\
Y' \ar[r]^u & Y} \] with $u$ proper.  Then the map $v$ is also proper, and
\[ u^* \circ f_! = g_! \circ v^* \colon \mathbb Q_c(X) \rightarrow \mathbb Q_c(Y'). \]
\end{lemma}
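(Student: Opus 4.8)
The plan is to reduce everything to the standard fact that, in a homotopy pullback square, the homotopy fibers along parallel edges agree. Concretely, since $X'$ is the homotopy pullback $Y' \times^h_Y X$, the homotopy fiber of $v \colon X' \rightarrow X$ over a point $x \in \pi_0(X)$ is weakly equivalent to the homotopy fiber of $u \colon Y' \rightarrow Y$ over $f(x)$, and the homotopy fiber of $g \colon X' \rightarrow Y'$ over $y' \in \pi_0(Y')$ is weakly equivalent to the homotopy fiber of $f \colon X \rightarrow Y$ over $u(y')$. The first of these immediately yields properness of $v$: for each $x$, the set of components of the fiber of $v$ over $x$ is identified with $\pi_0$ of the fiber of $u$ over $f(x)$, which is finite because $u$ is proper. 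In particular $v^*$ is then well-defined, by the discussion preceding the lemma.

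For the displayed identity, I would verify it pointwise. Fix $\alpha \in \mathbb Q_c(X)$ and $y' \in \pi_0(Y')$, set $y = u(y')$, and expand both sides. The left-hand side becomes
\[ (u^* \circ f_!)(\alpha)(y') = f_!(\alpha)(y) = \sum_{z \in \pi_0(F_y)} \alpha(i(z)) \cdot \prod_{i>0} |\pi_i(F_y, z)|^{(-1)^i}, \]
where $F_y$ is the homotopy fiber of $f$ over $y$ with natural map $i \colon F_y \rightarrow X$. The right-hand side becomes
\[ (g_! \circ v^*)(\alpha)(y') = \sum_{w \in \pi_0(G_{y'})} \alpha(v(j(w))) \cdot \prod_{i>0} |\pi_i(G_{y'}, w)|^{(-1)^i}, \]
where $G_{y'}$ is the homotopy fiber of $g$ over $y'$ with natural map $j \colon G_{y'} \rightarrow X'$. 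Both fibers are locally finite, so each sum is a finite expression of the type defining the push-forward.

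The heart of the argument is then to produce the weak equivalence $\phi \colon G_{y'} \rightarrow F_y$ supplied by the homotopy pullback identification, and to check that it is compatible with the maps into $X$, i.e. that $v \circ j$ and $i \circ \phi$ agree up to homotopy. Granting this, $\phi$ induces a bijection $\pi_0(G_{y'}) \cong \pi_0(F_y)$ carrying $w$ to $z = \phi(w)$, isomorphisms $\pi_i(G_{y'}, w) \cong \pi_i(F_y, z)$ for all $i > 0$ (so the finite orders match), and the equality $\alpha(v(j(w))) = \alpha(i(z))$ of $\pi_0$-values; hence the two sums agree term by term. The main obstacle is precisely this compatibility check: one must extract the equivalence of fibers from the universal property of the homotopy pullback in a way that genuinely respects the canonical maps to $X$ and the choice of basepoints, so that the induced isomorphisms on all homotopy groups and on the $\pi_0$-labels line up correctly. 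Once the fibers are identified compatibly, the remaining manipulation of the finite sums is purely formal.
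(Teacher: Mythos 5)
Your proof is correct, and there is nothing in the paper to compare it against: the paper does not prove this lemma but imports it verbatim from To\"en \cite[2.6]{toendha}, where the argument is essentially the one you give --- identify the homotopy fibers of parallel maps in the homotopy pullback square and match the push-forward/pullback formulas term by term. The compatibility check you flag as the main obstacle is routine: writing the homotopy fiber of $g$ over $y'$ as $\{y'\} \times^h_{Y'} X' \simeq \{y'\} \times^h_{Y'} (Y' \times^h_Y X) \simeq \{y'\} \times^h_Y X = F_y$, the equivalence commutes with the projections to $X$ by construction, which gives the required matching of $\pi_0$-labels and basepointed homotopy groups.
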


To define the multiplication on $\dhw$, first notice that we have an isomorphism
\[ \dhw \otimes \dhw \rightarrow \mathbb Q_c(X^{(0)} \times X^{(0)}) \] given by
\[ (f,g) \mapsto ((x,y) \mapsto f(x) \cdotp g(x)). \]  Then we can consider the map
\[ \mu = t_! \circ (s \times c)^* \colon \dhw \otimes \dhw \rightarrow \dhw. \]

The algebra structure on $\dhw$ is then given by
\[ x \cdotp y = \sum_z g^z_{x,y} z \] where
\[ g^z_{x,y}= \mu(\chi_x, \chi_y)(z) \] where $\chi_x$ denotes the characteristic function of $x$.

\begin{prop}
With this multiplication, $\dhw$ is a unital algebra.
\end{prop}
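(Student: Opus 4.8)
The plan is to follow To\"en's argument \cite{toendha} closely, proving unitality by a direct computation and associativity by comparing two ways of assembling the triple product through an auxiliary space of $2$-simplices, using the base-change formula of Lemma \ref{cartesian} in place of To\"en's manipulations of fiber products of model categories.

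For the unit I would take the class $\chi_0$ of the zero object $0$ of $W$. The two identities $\mu(\chi_0,\chi_y)=\chi_y$ and $\mu(\chi_y,\chi_0)=\chi_y$ are checked by the same computation as in To\"en's setting. Since $0$ is initial, $(s\times c)^*(\chi_0\otimes\chi_y)$ is supported on the single class of maps $0\to y$, whose cone is $y$ because the cone of $0\to t(u)$ is $t(u)\amalg_0 0\simeq t(u)$; the contractibility of $\map_W(0,y)$ makes the homotopy fiber of $t$ over $y$ on this support contractible, so the weight $\prod_{i>0}|\pi_i|^{(-1)^i}$ is $1$ and $t_!$ returns $\chi_y$. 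Dually, in $\mu(\chi_y,\chi_0)$ the vanishing of the cone forces the relevant morphism to be a homotopy equivalence, and the completeness of $W$ makes the corresponding fiber contractible, again yielding coefficient $1$. Hence $\chi_0$ is a two-sided unit.

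For associativity I would introduce the complete Segal space $W^{\Delta[2]}$ of $2$-simplices and set $X^{(2)}=\nerve(\Ho(wW^{\Delta[2]}))$, checking that it is locally finite and that the new projection maps are proper exactly as for $X^{(1)}$. A class in $X^{(2)}$ is a diagram $x_0\to x_1\to x_2$, that is, a $2$-step filtration of $x_2$; its three elementary cones furnish, via the face maps $\Delta[1]\to\Delta[2]$ and the vertex maps $\Delta[0]\to\Delta[2]$, a map $S\colon X^{(2)}\to X^{(0)}\times X^{(0)}\times X^{(0)}$ recording the bottom object together with the two successive subquotients, and a map $T\colon X^{(2)}\to X^{(0)}$ recording the total object. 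The goal is to show that both iterated products $\mu\circ(\mu\otimes\id)$ and $\mu\circ(\id\otimes\mu)$ coincide with the single composite $T_!\circ S^*$.

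To achieve this I would, for each iterated product, unfold it into a string of alternating pull-backs $(s\times c)^*$ and push-forwards $t_!$ and then reorganize that string into $T_!\circ S^*$ by commuting each $t_!$ past the adjacent $(s\times c)^*$; each such interchange is an instance of the base-change identity of Lemma \ref{cartesian}, and is legitimate precisely when the intervening square is homotopy cartesian. These homotopy cartesian squares are exactly what Lemma \ref{lemma} produces: the relevant flag space maps into a homotopy pullback $P$ of complete Segal spaces built from $W^{\Delta[1]}$ and $W$ along the source, target, and cone maps, and Lemma \ref{lemma} converts an equivalence of homotopy categories $\Ho(W^{\Delta[2]})\to\Ho(P)$ into homotopy cartesianness of the associated nerve square. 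The main obstacle is verifying the hypothesis of Lemma \ref{lemma}, namely that these comparison functors are equivalences of categories; this is where the stability of $W$ is indispensable, entering through the octahedral axiom of $\Ho(W)$---which identifies the cone of a composite with the extension of the two elementary cones---and through the coincidence of homotopy pushouts and homotopy pullbacks in a stable homotopy theory. Granting these equivalences, Lemma \ref{lemma} supplies the squares, Lemma \ref{cartesian} supplies the interchanges, and the two factorizations of the triple product agree, establishing associativity.
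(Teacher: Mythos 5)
Your unit argument is correct and is essentially the paper's own proof (both following To\"en): identify the support of $(s\times c)^*(\chi_0\otimes\chi_y)$ with the single class of maps $0\to y$, and show the push-forward weight equals $1$ because $t$ carries this component equivalently onto a component of $X^{(0)}$; dually, use that a map with vanishing cone is an equivalence and invoke completeness. In fact you supply the justification (initiality of $0$ and contractibility of $\map_W(0,y)$, to which one should add contractibility of $\map_W(0,0)$ so that the moduli of zero objects is also contractible) for the connectedness of the support, a point the paper itself flags with an unresolved ``[why is this?]''.

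One scope remark: in the paper this proposition asserts only that $\chi_0$ is a unit; associativity is the subsequent, separately stated theorem. So the bulk of your proposal, the $X^{(2)}$/base-change argument, addresses that other statement. As an outline it does match the paper's strategy, namely reducing to two homotopy cartesian squares via Lemma \ref{cartesian} and Lemma \ref{lemma}, but where you write ``granting these equivalences'' is precisely where the paper does the real work: it proves $\Ho(W^{\Delta[2]})\to\Ho(Z)$ is essentially surjective and fully faithful by explicit use of the triangulated-category axioms in $\Ho(W)$. For the proposition as stated your proof is complete; if associativity were part of the claim, that step could not be left as an assumption.
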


Our proof essentially follows the one given by To\"en \cite[4.1]{toendha}, with the necessary changes being made as we translate to the complete Segal space setting.

\begin{proof}
Given any object $x$ in $W$, let $\chi_x$ denote its characteristic function; in particular, consider $\chi_0$, the characteristic function of the zero object of $W$.

Notice that the set $\pi_0(X^{(1)})$ is isomorphic to the set of isomorphism classes of objects in $\Ho(wW^{\Delta[1]})$.  Thus, fix some 0-simplex $u \colon x \rightarrow y$ of $\map_W(x,y)$, regarded as an object of $\Ho(wW^{\Delta[1]})$.  Then
\[ (s \times c)^*(u) =
\begin{cases}
1 & \text{if } y \cong 0 \text{ and } x \cong z \text{ in } \Ho(wW) \\
0 & \text{otherwise.}
\end{cases} \]
In other words, $(s \times c)^*(\chi_0, \chi_x)$ is the characteristic function of the subset of $\pi_0(X^{(1)})$ consisting of maps $0 \rightarrow z$ with $z \cong x$ in $\Ho(wW)$.

Define $X$ to be the simplicial set contained in $X^{(1)}$ consisting of all the support of $(s \times c)^*(\chi_0, \chi_x)$, and notice that $X$ is a connected simplicial set.  Then using the definition of the product map $\mu$, we get
\[ \mu(\chi_0, \chi_x)(x) = \prod_{i>0} \left( |\pi_i(X)|^{(-1)^i} \cdotp |\pi_i(X^{(0)},x)|^{(-1)^{i+1}} \right). \]  Notice in particular that whenever $y \neq x$,
\[ \mu(\chi_0, \chi_x)(y)=0. \]

Restricting the target map $t \colon W^{\Delta[1]} \rightarrow W$ to the maps $y \rightarrow z$ such that $y \cong 0$ in $\Ho(wW)$, we see that on such objects $t$ is fully faithful, up to homotopy.  Thus, the induced map $t \colon X \rightarrow X^{(0)}$ induces isomorphisms $t_* \colon \pi_i(X) \rightarrow \pi_i(X^{(0)})$ for all $i>0$, and the simplicial set $X$ can be identified with a connected component of $X^{(0)}$.  Hence, $\mu(\chi_0, \chi_x)(x)=1$, so that $\mu(\chi_0, \chi_x) = \chi_x$.

Changing the order and following the same argument, one can see that we also have $\mu(\chi_x, \chi_0)= \chi_x$, thus proving that $\chi_0$ is a unit element for $\dhw$.
\end{proof}

\begin{theorem}
With this multiplication, $\dhw$ is an associative algebra.
\end{theorem}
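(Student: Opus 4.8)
The plan is to follow To\"en's strategy \cite[4.1]{toendha}: rather than compare the two iterated products $\mu(\mu(-,-),-)$ and $\mu(-,\mu(-,-))$ directly, I would introduce a single space of ``length-two filtrations'' together with one pushforward--pullback operator out of it, and show that \emph{both} groupings of the triple product coincide with this common operator. Concretely, set $X^{(2)} = \nerve(\Ho(wW^{\Delta[2]}))$, using that $W^{\Delta[2]}$ is again a stable finitary complete Segal space since $\css$ is cartesian closed. A point of $X^{(2)}$ is the isomorphism class of a composable pair $x_0 \to x_1 \to x_2$, from which I extract the vertices $x_0,x_1,x_2$, the three cones $c(x_0 \to x_1)$, $c(x_1 \to x_2)$, $c(x_0 \to x_2)$, and the octahedral cofiber sequence $c(x_0 \to x_1) \to c(x_0 \to x_2) \to c(x_1 \to x_2)$ relating them. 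I would then define $S \colon X^{(2)} \to X^{(0)} \times X^{(0)} \times X^{(0)}$ recording the associated graded $(x_0, c(x_0 \to x_1), c(x_1 \to x_2))$ and $T \colon X^{(2)} \to X^{(0)}$ recording the total object $x_2$, and aim to show $\mu(\mu(-,-),-) = T_! \circ S^* = \mu(-,\mu(-,-))$.

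Next I would build, for each of the two groupings, a commutative diagram interpolating between $X^{(0)} \times X^{(0)} \times X^{(0)}$ and $X^{(0)}$. For the grouping corresponding to $\mu(\mu(-,-),-)$, unwinding the definition $\mu = t_! \circ (s \times c)^*$ shows that the iterated product factors through $X^{(2)}$ via the two maps $X^{(2)} \to X^{(1)}$ that remember, respectively, the cofiber sequence associated to $x_0 \to x_1$ and the one associated to $x_1 \to x_2$. The essential geometric input is that these maps exhibit a homotopy-cartesian square
\[ \xymatrix{X^{(2)} \ar[r] \ar[d] & X^{(1)} \ar[d]^t \\ X^{(1)} \ar[r]^s & X^{(0)},} \]
that is, $X^{(2)} \simeq X^{(1)} \times^h_{X^{(0)}} X^{(1)}$. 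This is nothing but the Segal decomposition of $W^{\Delta[2]}$ transported to nerves of homotopy categories of equivalences, and it is exactly what Lemma \ref{lemma} licenses, once one checks that $\Ho(wW^{\Delta[2]}) \to \Ho(wW^{\Delta[1]} \times^h_{wW} wW^{\Delta[1]})$ is an equivalence of categories.

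With these squares in hand, I would prove associativity by repeated application of the base-change identity $u^* \circ f_! = g_! \circ v^*$ of Lemma \ref{cartesian}. At each stage one must verify that the relevant ``source-and-cone'' type map being pulled back is proper, exactly as was already checked for $s \times c$, so that all the $(-)_!$ and $(-)^*$ operators are defined on $\mathbb Q_c$ and the hypotheses of Lemma \ref{cartesian} are met; local finiteness of $X^{(2)}$, and hence of every space appearing in the diagrams, follows as before from the finitary assumption on $W$ together with the identification of homotopy groups with $\Ext$-groups. Commuting the pushforwards past the pullbacks then collapses the first grouping to $T_! \circ S^*$. The grouping $\mu(-,\mu(-,-))$ is handled symmetrically, the only difference being that the outer cofiber sequence is now the one associated to $x_0 \to x_2$ and its decomposition is governed by the octahedral sequence $c(x_0 \to x_1) \to c(x_0 \to x_2) \to c(x_1 \to x_2)$; it likewise collapses to $T_! \circ S^*$, whence the two products agree.

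I expect the main obstacle to be the verification that the comparison functors are genuine equivalences of homotopy categories, so that Lemma \ref{lemma} applies and the squares of nerves are homotopy cartesian. In the model-category setting To\"en obtains such squares from honest homotopy fiber products of model categories; here the work is to see that the Segal decomposition of $W^{\Delta[2]}$, and the compatibility of the cone construction with the octahedral axiom, survive the passage $W \mapsto \nerve(\Ho(wW))$, and that the weightings by $\prod_{i>0} |\pi_i|^{(-1)^i}$ match on the nose under base change. Once the homotopy-cartesian squares are confirmed, the remaining bookkeeping is formal, being identical to To\"en's argument \cite[4.1]{toendha} modulo the translation of fiber products of model categories into homotopy pullbacks of complete Segal spaces furnished by the theorem of Section~4.
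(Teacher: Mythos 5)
Your plan reproduces the paper's proof almost step for step: the same space $X^{(2)} = \nerve(\Ho(wW^{\Delta[2]}))$, the same reduction of both groupings of the triple product to a single operator $T_! \circ S^*$ via the base-change identity of Lemma \ref{cartesian}, the same properness and local-finiteness checks, and the same observation that the square governing $\mu(\mu(-,-),-)$ is just the Segal decomposition $W^{\Delta[2]} \simeq W^{\Delta[1]} \times^h_W W^{\Delta[1]}$ (pullback along target and source), which makes that square homotopy cartesian essentially for free via Lemma \ref{lemma}.

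The gap is in your treatment of the other grouping. The square
\[ \xymatrix{X^{(2)} \ar[r]^h \ar[d]_k & X^{(1)} \ar[d]^c \\
X^{(1)} \ar[r]^t & X^{(0)}} \]
(where $h$ records $x_0 \to x_2$ and $k$ records $c(x_0 \to x_1) \to c(x_0 \to x_2)$) is \emph{not} ``handled symmetrically'': it is not an instance of the Segal decomposition, because the relevant homotopy pullback $Z = W^{\Delta[1]} \times^h_W W^{\Delta[1]}$ is now formed along the cone map $c$ and the target map $t$, and $W^{\Delta[2]}$ is not tautologically equivalent to this pullback. To invoke Lemma \ref{lemma} one must actually prove that the comparison functor $\Ho(W^{\Delta[2]}) \to \Ho(Z)$ is an equivalence of categories, and this is precisely where the paper does its only non-formal work: essential surjectivity is proved by taking an object $(x \to z,\ w \to z \amalg_x 0)$ of $\Ho(Z)$ and using the triangulated-category axioms in $\Ho(W)$ to produce $y$ with $x \to y \to z$ lifting it (your ``octahedral sequence''), and full faithfulness is proved by lifting a compatible pair of commuting squares to a map of filtrations $(x \to y \to z) \to (x' \to y' \to z')$, again via the triangulated axioms. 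You correctly name the octahedral axiom as the relevant input and you flag this verification as ``the main obstacle,'' but you never carry it out; since everything else in the argument is formal bookkeeping with $(-)_!$ and $(-)^*$, this deferred step is the actual mathematical content of the theorem, and the proposal is incomplete without it.
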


\begin{proof}
Consider the complete Segal space $W^{\Delta[2]}$, and, as with $W^{\Delta[1]}$ and $W$, denote by $X^{(2)}$ the simplicial set $\nerve(\Ho(wW^{\Delta[2]}))$.  Notice that there are three natural maps \[ f,g,h \colon W^{\Delta[2]} \rightarrow W^{\Delta[1]} \] induced by the three inclusion maps $\Delta[1] \rightarrow \Delta[2]$, where $f$ sends $x \rightarrow y \rightarrow z$ to $x \rightarrow y$, $g$ sends it to $y \rightarrow z$, and $h$ sends it to $x \rightarrow z$.  There is also a cone map
\[ k \colon W^{\Delta[2]} \rightarrow W^{\Delta[1]} \] given by
\[ (x \rightarrow y \rightarrow z) \mapsto (y \amalg_x 0 \rightarrow z \amalg_x 0), \] with the pushouts defined as before in a stable complete Segal space, and a map between the two given by the universal property.  This map may not be unique, but all such maps form a weakly contractible space.

Using these maps, we get two diagrams:
\[ \xymatrix{X^{(2)} \ar[r]^g \ar[d]_{f \times (c \circ k)} & X^{(1)} \ar[r]^t \ar[d]^{s \times c} & X^{(0)} \\
X^{(1)} \times X^{(0)} \ar[r]^{t \times \id} \ar[d]_{(s \times c) \times \id} & X^{(0)} \times X^{(0)} & \\
(X^{(0)} \times X^{(0)}) \times X^{(0)} && } \] and
\[ \xymatrix{X^{(2)} \ar[r]^h \ar[d]_{(s \circ f) \times k} & X^{(1)} \ar[r]^t \ar[d]^{s \times c} & X^{(0)} \\
X^{(0)} \times X^{(1)} \ar[r]^{\id \times t} \ar[d]_{\id \times (s \times c)} & X^{(0)} \times X^{(0)} & \\
X^{(0)} \times (X^{(0)} \times X^{(0)}) && } \] which both give the same result taking composites across the top and down the left side:
\[ \xymatrix{X^{(2)} \ar[r] \ar[d] & X^{(0)} \\
X^{(0)} \times X^{(0)} \times X^{(0)}. & } \]

Thus, to prove associativity of $\dhw$, it suffices by Lemma \ref{cartesian} to prove that the square in each of these diagrams is homotopy cartesian.
In fact, it suffices to show that the diagrams
\[ \xymatrix{X^{(2)} \ar[r]^g \ar[d]_f & X^{(1)} \ar[d]^s \\
X^{(1)} \ar[r]^t & X^{(0)} } \hskip .5 in
\xymatrix{X^{(2)} \ar[r]^h \ar[d]^k & X^{(1)} \ar[d]^c \\
X^{(1)} \ar[r]^t & X^{(0)} } \] are homotopy cartesian.  For the first diagram, this fact follows immediately from the fact that the original diagram
\[ \xymatrix{ W^{\Delta[2]} \ar[r]^g \ar[d]_f & W^{\Delta[1]} \ar[d]^s \\
W^{\Delta[1]} \ar[r]^t & W } \] is a homotopy pullback diagram of complete Segal spaces.   To show that the second diagram is homotopy cartesian requires more effort.

In this second diagram, let $Z$ denote the homotopy pullback $W^{\Delta[1]} \times^h_W W^{\Delta[1]}$.  Using Lemma \ref{lemma}, it suffices to prove that $\Ho(W^{\Delta[2]}) \rightarrow \Ho(Z)$ is fully faithful and essentially surjective.  We begin with the argument for the latter.  Suppose we have an object $(x \rightarrow z, w \rightarrow z \amalg_x 0)$ in $\Ho(Z)$; we want to find an object $y$ of $W$ such that $x\rightarrow y \rightarrow z$ is an object of $\Ho(W^{\Delta[2]})$ with $y \amalg_x 0 = w$.  Such a $y$ can be found by applying the axioms for a triangulated category to the diagram
\[ \xymatrix{x \ar@{-->}[r] \ar[d]_= & y \ar@{-->}[r] \ar@{-->}[d] & w \ar[r] \ar[d] & x[1] \ar[d]^= \\
x \ar[r] & z \ar[r] & z \amalg_x 0 \ar[r] & x[1].} \]

To prove that the functor is fully faithful, we need to prove that, for any objects $x \rightarrow y \rightarrow z$ and $x' \rightarrow y' \rightarrow z'$ in $\Ho(W^{\Delta[2]})$, the map
 \begin{multline*}
 \Hom_{\Ho(W^{\Delta[2]})}(x \rightarrow y \rightarrow z, x' \rightarrow y' \rightarrow z') \rightarrow \\ \Hom_{\Ho(Z)}((x \rightarrow z, y \amalg_x 0 \rightarrow z \amalg_x 0), (x' \rightarrow z', y' \amalg_{x'} 0 \rightarrow z' \amalg_{x'} 0))
\end{multline*}
is an isomorphism.
Elements of the set on the left-hand side are triples of maps making the diagram
\[ \xymatrix{x \ar[r] \ar[d] & y \ar[r] \ar[d] & z \ar[d] \\
x' \ar[r] & y' \ar[r] & z'} \] commute, where elements of the set on the right-hand side are 4-tuples of maps making the pair of diagrams
\[ \xymatrix{x \ar[r] \ar[d] & z \ar[d] \\
x' \ar[r] & z',} \hskip .5 in
\xymatrix{y \amalg_x 0 \ar[r] \ar[d] & z \amalg_x 0 \ar[d] \\
y' \amalg_{x'} 0 \ar[r] & z' \amalg_{x'} 0} \] commute.  Given an element of the right-hand set, we can use the axioms for a triangulated category to find a map $y \rightarrow y'$ compatible with the maps $x \rightarrow x'$ and $z \rightarrow z'$ to obtain an element of the left-hand set.  Thus, the map is surjective.  A similar argument can be used to prove that it is injective.
\end{proof}

The proof of the following formula is essentially the same as the one given by To\"en \cite[5.1]{toendha}; we give it here with the necessary changes to our situation.

\begin{prop}
The derived Hall numbers are given by
\[ g^z_{x,y}= \frac{|[x,z]_y| \cdotp \prod_{i>0} |\Ext^{-i}(x,z)|^{(-1)^i}}{|\Aut(x)| \cdotp \prod_{i>0} |\Ext^{-i}(x,x)|^{(-1)^i}}, \] where $[x,z]_y$ denotes the subset of $[x,z]$ of morphisms $f \colon x \rightarrow z$ whose cone is isomorphic to $y$ in $\Ho(W)$.
\end{prop}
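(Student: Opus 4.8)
The plan is to unwind the definition $g^z_{x,y}=\mu(\chi_x,\chi_y)(z)=\bigl(t_!\circ(s\times c)^*\bigr)(\chi_x,\chi_y)(z)$ against the explicit formulas for $f^*$ and $f_!$, following To\"en's computation in \cite[5.1]{toendha} and making the substitutions appropriate to the complete Segal space setting. First I would identify the pulled-back function. Under the isomorphism $\dhw\otimes\dhw\cong\mathbb Q_c(X^{(0)}\times X^{(0)})$ the element $(\chi_x,\chi_y)$ is the characteristic function of the point $(x,y)$, and $s\times c$ sends a morphism $u\colon a\to b$ to $(a,\,b\amalg_a 0)$; hence $\alpha:=(s\times c)^*(\chi_x,\chi_y)$ is the characteristic function on $\pi_0(X^{(1)})$ of those $u$ whose source is isomorphic to $x$ and whose cone is isomorphic to $y$ in $\Ho(W)$. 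Feeding this into the definition of $t_!$ gives
\[ g^z_{x,y}=\sum_{w\in\pi_0(F_z)}\alpha(i(w))\cdot\prod_{i>0}|\pi_i(F_z,w)|^{(-1)^i}, \]
where $F_z$ is the homotopy fiber of $t\colon X^{(1)}\to X^{(0)}$ over $z$ and $i\colon F_z\to X^{(1)}$ is the natural map; only the components $w$ supporting $\alpha$, namely the isomorphism classes of maps $u\colon x\to z$ with cone isomorphic to $y$, contribute.

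The crux of the argument is the identification of $F_z$, and this is where To\"en's fiber product of model categories is replaced by a mapping-space fibration intrinsic to $W$. The simplicial set $X^{(1)}=\nerve(\Ho(wW^{\Delta[1]}))$ is the moduli space of morphisms of $W$; its component at $u\colon a\to b$ is $B\Aut^h(u)$, and the source-target map $X^{(1)}\to X^{(0)}\times X^{(0)}$ is a fibration with homotopy fiber over $(a,b)$ the mapping space $\map_W(a,b)$. Taking the homotopy fiber of the target map over the chosen object $z$ absorbs the classifying data $B\Aut^h(z)$ of the automorphisms of $z$ into the fiber, and I would show this yields a weak equivalence
\[ F_z\simeq\coprod_{[a]}\map_W(a,z)_{h\Aut^h(a)}, \]
the coproduct running over isomorphism classes of objects $a$ of $\Ho(W)$, with $\Aut^h(a)\subseteq\map_W(a,a)$ the grouplike monoid of self-equivalences acting by precomposition. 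Crucially the automorphisms of $z$ do not survive, which is already reflected in the shape of the target formula. The support condition then restricts attention to the summand $[a]=[x]$ and, inside it, to the components lying over $[x,z]_y\subseteq[x,z]=\pi_0\map_W(x,z)$.

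Finally I would evaluate the alternating product of orders componentwise using the fibration
\[ \map_W(x,z)\longrightarrow\map_W(x,z)_{h\Aut^h(x)}\longrightarrow B\Aut^h(x). \]
The homotopy-group identifications already used for $X^{(0)}$ and $X^{(1)}$, namely $\pi_i(\map_W(x,z),u)=\Ext^{-i}(x,z)$ for $i>0$, $\pi_1(B\Aut^h(x))=\Aut(x)$, and $\pi_i(B\Aut^h(x))=\Ext^{-(i-1)}(x,x)$ for $i\geq2$, feed into the multiplicativity of this ``homotopy Euler characteristic'' across a fibration; equivalently, the homotopy cardinality of a homotopy orbit space satisfies $\|Y_{h\Aut^h(x)}\|=\|Y\|/\|\Aut^h(x)\|$. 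Applying this with $Y=\map_W(x,z)$ restricted to the $|[x,z]_y|$ components with cone isomorphic to $y$, and computing
\[ \Bigl\|\map_W(x,z)\big|_{[x,z]_y}\Bigr\|=|[x,z]_y|\cdot\prod_{i>0}|\Ext^{-i}(x,z)|^{(-1)^i},\qquad \|\Aut^h(x)\|=|\Aut(x)|\cdot\prod_{i>0}|\Ext^{-i}(x,x)|^{(-1)^i}, \]
the sum over $w$ collapses to exactly the claimed expression for $g^z_{x,y}$.

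The main obstacle is the bookkeeping in this last step. The long exact sequence of the fibration must be handled carefully at the low degrees $\pi_0$ and $\pi_1$, where the $\Aut(x)$-action, its stabilizers on $[x,z]_y$, and the orbit-stabilizer relation $|[x,z]_y|=\sum_w|\Aut(x)|/|\mathrm{Stab}(u_w)|$ intervene, and one must justify that the alternating products are finite and multiply correctly across the fibration. This is precisely where the finitary hypothesis on $W$ is indispensable: it makes every space in sight locally finite, so that $t_!$ is defined and the homotopy-cardinality manipulations converge. Verifying the identification of $F_z$ together with the cancellation of $\Aut(z)$ in the complete Segal space setting is the only genuinely new ingredient relative to \cite[5.1]{toendha}.
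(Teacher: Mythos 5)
Your proposal is correct and is, in substance, the paper's own proof with the counting step repackaged in the language of homotopy cardinality. The paper identifies the homotopy fiber $F^z$ of $t$ over $z$ with the nerve of the category $\text{equiv}(W \downarrow z)$ of objects over $z$ and equivalences between them, which is exactly your Borel-construction description $\coprod_{[a]}\map_W(a,z)_{h\Aut^h(a)}$ in different notation; it then carries out your cardinality computation by hand, using the long exact sequence of the fibration
\[ \map_{W \downarrow z}(x,x) \rightarrow \map_W(x,x) \rightarrow \map_W(x,z) \]
(the once-looped version of your Borel fibration) to get the per-component alternating product in terms of the stabilizer $\Aut(f/z)$ of $f \in [x,z]_y$ under the $\Aut(x)$-action, and then summing over components via the orbit--stabilizer relation, using that $\pi_0(F^z_{x,y}) \cong [x,z]_y/\Aut(x)$. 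Thus the two steps you defer --- the identification of $F_z$ and the multiplicativity rule for the cardinality of a homotopy quotient --- are precisely the two steps the paper proves explicitly, and your low-degree bookkeeping concern ($\pi_0$, $\pi_1$, stabilizers) is exactly where the paper's explicit argument does the work.
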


\begin{proof}

Given the target map $t \colon X^{(1)} \rightarrow X^{(0)}$ and an object $z$ of $Ho(W)$, let $F^z$ denote the homotopy fiber of $t$ over $z$.  Using the definitions of $X^{(1)}$ and $X^{(0)}$, notice that $F^z$ is weakly equivalent to the nerve of the category $\text{equiv}(W \downarrow z)$ whose objects are maps from arbitrary objects of $W$ to $z$, and whose morphisms are the homotopy equivalences of $W$, making the resulting triangular diagram commute.

Given two other objects $x$ and $y$ of $W$, let $F^z_{x,y}$ denote the nerve of the full subcategory of $\text{equiv}(W \downarrow z)$ whose objects are the maps $u \colon x' \rightarrow z$, where $x' \simeq x$, and whose cofiber is equivalent to $y$.  Notice that $F^z_{x,y}$ is locally finite, since both $X^{(1)}$ and $X^{(0)}$ are; moreover, $\pi_0(F^z_{x,y})$ is finite, and it is isomorphic to $[x,z]_y/\Aut(x)$.

Using $F^z_{x,y}$, we can reformulate our definition of the derived Hall number $g^z_{x,y}$ as
\[ g^z_{x,y}= \sum_{(u \colon x' \rightarrow y) \in \pi_0(F^z_{x,y})} \prod_{i>0} |\pi_i(F^z_{x,y}, u)|^{(-1)^i}. \]

We first prove that
\[ \prod_{i>0} |\pi_i(F^z_{x,y},u)|^{(-1)^i}= |\Aut(f/z)|^{-1} \cdotp \prod_{i>0} |\Ext^{-i}(x,z)|^{(-1)^i} \cdotp |\Ext^{-i}(x,x)|^{(-1)^{i+1}}, \] where $\Aut(f/z)$ denotes the stabilizer of a map $f \in [x,z]_y$ under the action of $\Aut(x)$.

Notice that we get a homotopy cartesian square of mapping spaces
\[ \xymatrix{\map_{W \downarrow z}(x,x) \ar[r] \ar[d] & \map_W(x,x) \ar[d] \\
\ast \ar[r] & \map_W(x,z) } \] where the bottom horizontal map specifies the map $u \colon x \rightarrow z$.  Thus, we have a fibration of simplicial sets, and hence a long exact sequence of homotopy groups
\begin{multline*}
\cdots \rightarrow \pi_2(\map(x,z)) \rightarrow \pi_1(\map_{W \downarrow z}(x,x)) \rightarrow \pi_1(\map_W(x,x)) \rightarrow \pi_1(\map_W(x,z)) \\ \rightarrow \pi_0(\map_{W \downarrow z}(x,x)) \rightarrow \pi_0(\map_W(x,x)) \rightarrow \pi_0(\map_W(x,z)) \rightarrow 0.
\end{multline*}
Composing the last two maps between nontrivial sets, we get a surjection
\[ \pi_0(\map_{W \downarrow z}(x,x)) \rightarrow \Aut(f/z). \]

Furthermore, notice that $\pi_i(\map_W(x,z))= [x, z[-i]]= \Ext^{-i}(x,z)$ and, similarly, that $\pi_i(\map_W(x,x))= \Ext^{-i}(x,x)$.  Finally, observe that $\pi_i(\map_{W\downarrow z}(x,x))$ is weakly equivalent to $\pi_{i+1}(\nerve(\text{equiv}(W \downarrow z)), u)$, which, as we have noted previously, is equivalent to $\pi_{i+1}(F^z_{x,y}, u)$.  Thus, we have a long exact sequence
\begin{multline*}
\cdots \rightarrow \Ext^{-2}(x,z) \rightarrow \pi_2(F^z_{x,y},u) \rightarrow \Ext^{-1}(x,x) \rightarrow \Ext^{-1}(x,z) \\ \rightarrow \pi_1(F^z_{z,y},u) \rightarrow \Aut(f/z) \rightarrow 0.
\end{multline*}
Using properties of long exact sequences, we obtain the equation given above.

To prove the statement of the proposition, we use the fact that, since $\Aut(x)$ is a finite group and $[x,z]_y$ is a finite set, we get that
\[ \frac{|[x,z]_y|}{|\Aut(x)|} = \sum_{f \in ([x,z]_y/\Aut(x))} |\Aut(f/x)|. \]  The formula follows.
\end{proof}


\begin{thebibliography}{99}

\bibitem{css}
J.E. Bergner, Complete Segal spaces arising from simplicial
categories, \emph{Trans. Amer. Math. Soc.} 361  (2009), 525-546.

\bibitem{fiberprod}
J.E.\ Bergner, Homotopy fiber products of homotopy theories, to appear in \emph{Israel J.\ Math.\ }, preprint available at math.AT/0811.3175.

\bibitem{thesis}
J.E. Bergner, Three models for the homotopy theory of homotopy
theories, \emph{Topology} 46 (2007), 397-436.

\bibitem{bv}
J.M. Boardman and R.M. Vogt, \emph{Homotopy invariant algebraic
structures on topological spaces. Lecture Notes in Mathematics,
Vol. 347.} Springer-Verlag, 1973.

\bibitem{ds}
W.G. Dwyer and J. Spalinski, Homotopy theories and model
categories, in \emph{Handbook of Algebraic Topology},  Elsevier,
1995.

\bibitem{gj}
P.G. Goerss and J.F. Jardine,  \emph{Simplicial Homotopy Theory,
Progress in Math}, vol. 174, Birkhauser, 1999.

\bibitem{green}
J.A. Green, Hall algebras, hereditary algebras and quantum groups,
\emph{Invent. Math.} 120(1995), 361-377.

\bibitem{hovey}
Mark Hovey, \emph{Model Categories, Mathematical Surveys and
Monographs, 63}. American Mathematical Society 1999.

\bibitem{joyal}
A. Joyal, The theory of quasi-categories I, in preparation.

\bibitem{jt}
Andr\'{e} Joyal and Myles Tierney, Quasi-categories vs Segal
spaces, \emph{Contemp. Math.} 431 (2007) 277-326.

\bibitem{krause}
Henning Krause, Derived categories, resolutions, and Brown representability, \emph{Interactions between homotopy theory and algebra}, 101-139, \emph{Contemp. Math.}, 436, Amer. Math. Soc., Providence, RI, 2007.

\bibitem{luriestable}
Jacob Lurie, Derived algebraic geometry I: Stable
$\infty$-categories, preprint available at math.CT/0608228.

\bibitem{lurie}
Jacob Lurie, \emph{Higher topos theory. Annals of Mathematics Studies}, 170. Princeton University Press, Princeton, NJ, 2009.

\bibitem{macl}
Saunders Mac Lane, \emph{Categories for the Working Mathematician,
Second Edition, Graduate Texts in Mathematics 5}, Springer-Verlag,
1997.

\bibitem{px}
Liangang Peng and Jie Xiao, Root categories and simple Lie algebras,  \emph{J.\ Algebra} 198 (1997), no.\ 1, 19-56.

\bibitem{reedy}
C.L. Reedy, Homotopy theory of model categories, unpublished
manuscript, available at http://www-math.mit.edu/\verb1~1psh.

\bibitem{rezk}
Charles Rezk, A model for the homotopy theory of homotopy theory,
\emph{Trans. Amer. Math. Soc.}, 353(3), 973-1007.

\bibitem{ringel}
Claus Michael Ringel, Hall algebras and quantum groups, \emph{Invent. Math.}
101(1990), 583-592.

\bibitem{ringelrevis}
Claus Michael Ringel, Hall algebras revisited.  \emph{Quantum deformations of algebras and their representations}, 171-176,
\emph{Israel Math. Conf. Proc.}, 7, Bar-Ilan Univ., Ramat Gan, 1993.

\bibitem{schiff}
Olivier Schiffmann, Lectures on Hall algebras, preprint available at math.RT/0611617.

\bibitem{segal}
Graeme Segal, Categories and cohomology theories, \emph{Topology}
13 (1974), 293-312.


\bibitem{toendha}
Bertrand To\"{e}n, Derived Hall algebras, \emph{Duke Math. J.}
135, no.\ 3 (2006), 587-615.

\bibitem{toendg}
Bertrand To\"{e}n, The homotopy theory of dg-categories and
derived Morita theory, \emph{Invent. Math.} 167 (2007), no. 3,
615-667.

\bibitem{xx}
Jie Xiao and Fan Xu, Hall algebras associated to triangulated
categories,  \emph{Duke Math.\ J.\ }  143 (2008), no.\ 2, 357-373.

\end{thebibliography}
\end{document}